\numberwithin{equation}{section}
\newcommand{\pO}{\partial\Omega}
\newcommand{\eps}{\varepsilon}
\newtheorem{theor}{Theorem}[section]
\newtheorem{propo}{Proposition}[section]
\newtheorem{lem}{Lemma}[section]
\newtheorem{rem}{Remark}[section]
\date{}
\author{A. Attouchi\thanks{
Universit\'e Paris 13, Sorbonne Paris Cit\'e, Laboratoire Analyse, G\'eom\'etrie et Applications (UMR CNRS 7539), 99, avenue Jean-Baptiste Cl\'ement
93430 - Villetaneuse, France.}
\and
G.~Barles\thanks{Laboratoire de
  Math\'ematiques et Physique Th\'eorique (UMR CNRS 7350), F\'ed\'eration Denis
  Poisson (FR CNRS 2964), Universit\'e Fran\c{c}ois Rabelais, Parc de Grandmont,
  37200 Tours, France.}}
\title{Global Continuation beyond Singularities on the Boundary for a Degenerate Diffusive Hamilton-Jacobi Equation}
\begin{document}
\maketitle

\begin{abstract}
In this article, we are interested in the Dirichlet problem for parabolic viscous Hamilton-Jacobi Equations. It is well-known that the gradient of the solution may blow up in finite time on the boundary of the domain, preventing a classical extension of the solution past this singularity. This behavior comes from the fact that one cannot prescribe the Dirichlet boundary condition for all time and, in order to define a solution globally in time, one has to use ``generalized boundary conditions'' in the sense of viscosity solution. In this work, we treat the case when the diffusion operator is the p-Laplacian where the gradient dependence in the diffusion creates specific difficulties. In this framework, we obtain the existence and uniqueness of a continuous, global in time, viscosity solution. For this purpose, we prove a Strong Comparison Result between semi-continuous viscosity sub and super-solutions. Moreover, the asymptotic behavior of $\dfrac{u(x,t)}{t}$ is analyzed through the study of the associated ergodic problem.
\end{abstract}

\vspace{1cm} \noindent{\bf Key-words : }Viscous Hamilton-Jacobi
Equations, generalized Dirichlet problem, maximum principle,
viscosity solutions, p-Laplacian.

\vspace{0.3cm}
\noindent{\bf AMS subject classifications : }
35K10, 35K20, 49L25, 53C44 , 35B50, 35B05.

\section{Introduction  and Main Results}
In this article we are interested in the  following generalized Dirichlet problem  for   second-order degenerate parabolic
partial differential equations
\begin{eqnarray}\label{pb}
 u_t-\text{div}\left([D u|^{p-2} D u\right) +| D u|^q&=&f(x,t)\quad\hbox{in   }\Omega\times (0, +\infty)\\
u(x,0)&=&u_0(x)\quad\hbox{on   }\overline{\Omega},\label{initial}\\  
u(x,t)&=&g(x,t)\quad\hbox{on   }\partial\Omega\times (0, +\infty), \label{boundary}
\end{eqnarray}
where $q>p\geq 2$,  $u_0$ and $g$ are continuous functions satisfying the compatibility condition
\begin{equation}\label{compa}
 u_0(x)=g(x,0)\quad\hbox{on   } \partial\Omega
\end{equation}
Most of works devoted to this degenerate  diffusive Hamilton-Jacobi equation concerned 
 the case where $\Omega=\mathbb{R}^N$,  providing results on well-posedness, gradient estimates and asymptotic behavior of either classical or weak solutions in the sense of distributions (see \cite{benlocal,  amourglobal,   superlinear, lauren1} and the references therein).

Some other works are concerned with the solvability of the Cauchy-Dirichlet problem.
They proved that, under suitable assumptions on $u_0$ and $g$, there exists a weak solution on some time interval $[0, T_{max}(u_0))$, with the property that its gradient blows up on the boundary $\pO$ while the solution itself remains bounded. We refer the reader to \cite{amala}  \cite{kawohl}  and \cite{laust} for the degenerate parabolic case and to \cite{zhan} for the uniformly parabolic case. This singularity is a difficulty to extend the solution past $T_{max}(u_0)$. A natural question is then: Can we extend the weak solution past   $t=T_{max}(u_0)$  and in which sense ?

Let us mention here that a result in this direction where the continuation beyond gradient blow-up does not satisfy  the original  boundary conditions was obtained in \cite{fila, winkler}.

Recently,  for the linear diffusion case ($p=2$), Barles and Da Lio \cite{daba} showed that such gradient blow-up is related to a loss of boundary condition and address the problem through a viscosity solutions approach. They proved a ''Strong Comparison Result'' (that is a comparison result  between \textsl{discontinuous} viscosity sub and supersolutions)
which allowed them to obtain the existence of a unique  continuous, {\em global in time viscosity solution} of \eqref{pb}--\eqref{boundary}, the Dirichlet boundary condition being understood in the {\em generalized sense of viscosity solution theory}. They also provided an explicit expression of the solution of \eqref{pb}--\eqref{boundary} in terms of a value function of some exit time control problem, which allows a simple explanation of the losses of boundary condition when it arises.

We recall that the formulation of the generalized Dirichlet boundary condition for \eqref{pb}--\eqref{boundary} in the viscosity sense  reads
\begin{equation}
\min \left( u_t-\text{div}\left(|D u|^{p-2} D u\right) +| D u|^q-f(x,t), u-g\right)\leq 0 \quad\hbox{on   } \pO\times(0,+\infty),
\end{equation}
and
\begin{equation}
\max \left( u_t-\text{div}\left(|D u|^{p-2} D u\right) +| D u|^q-f(x,t), u-g\right)\geq 0 \quad\hbox{on   }\pO\times(0,+\infty).
\end{equation}

Our first result mainly extends the investigation of \cite{daba} to the degenerate diffusion case $p>2$.
\begin{theor}\label{globalex}
 Assume that $q>p\geq 2$ and that $\Omega$ is a bounded domain with a $C^2$-boundary. For any $u_0 \in C(\overline{\Omega})$, $f \in C\left (\overline{\Omega}\times[0, T]\right)$ and $g\in C\left(\partial\Omega\times[0, T]\right)$ satisfying \eqref{compa}, there exists  a unique continuous solution $u$ of \eqref{pb}--\eqref{boundary} which is defined globally in time.
\end{theor}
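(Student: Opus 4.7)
The strategy is a vanishing viscosity / half-relaxed limits approach combined with the Strong Comparison Result (SCR) the authors announce in the introduction. I would first solve a non-degenerate regularization
\begin{equation*}
u^\eps_t - \mathrm{div}\bigl((\eps^2+|Du^\eps|^2)^{(p-2)/2}Du^\eps\bigr) + |Du^\eps|^q = f_\eps,\quad u^\eps|_{\pO}=g_\eps,\ u^\eps(\cdot,0)=u_0^\eps,
\end{equation*}
where $f_\eps, g_\eps, u_0^\eps$ are smooth data converging uniformly to $f, g, u_0$ and compatible on $\pO\times\{0\}$. For fixed $\eps>0$ this is a uniformly parabolic quasilinear problem with smooth data, and classical theory (e.g.\ Ladyzhenskaya--Solonnikov--Ural'ceva) provides a unique smooth solution $u^\eps$ attaining the boundary datum in the classical sense.

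Next, I would establish $\eps$-uniform $L^\infty$ bounds on any strip $\overline{\Omega}\times[0,T]$. Functions of the form $\pm(\|u_0\|_\infty+\|g\|_\infty+Ct)$ are classical sub- and supersolutions of the regularized equation once $C$ depends suitably on $\|f\|_\infty$, yielding $\|u^\eps\|_\infty \leq C(T)$. The Barles--Perthame half-relaxed limits
\begin{equation*}
\overline{u}(x,t)=\limsup_{\eps\to 0,\,(y,s)\to(x,t)} u^\eps(y,s),\qquad \underline{u}(x,t)=\liminf_{\eps\to 0,\,(y,s)\to(x,t)} u^\eps(y,s),
\end{equation*}
are then well-defined, respectively USC and LSC on $\overline{\Omega}\times[0,T]$. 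Since $(\eps^2+|\xi|^2)^{(p-2)/2}\xi \to |\xi|^{p-2}\xi$ locally uniformly, the standard stability theorem ensures that $\overline{u}$ is a viscosity subsolution and $\underline{u}$ a viscosity supersolution of \eqref{pb}, with the Dirichlet datum understood in the generalized viscosity sense on $\pO\times(0,+\infty)$. Barriers built from the signed distance to $\pO$ and from the regularized initial datum $u_0^\eps$ force the initial trace: $\overline{u}(\cdot,0)=\underline{u}(\cdot,0)=u_0$ on $\overline{\Omega}$.

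At this point, the Strong Comparison Result yields $\overline{u}\leq \underline{u}$ on $\overline{\Omega}\times[0,T]$; since the opposite inequality is automatic from the definitions, $u:=\overline{u}=\underline{u}$ is continuous, the whole family $(u^\eps)$ converges to $u$ locally uniformly, and $u$ is a viscosity solution of \eqref{pb}--\eqref{boundary}. Uniqueness is an immediate corollary of the SCR applied to any two such solutions, and since $T>0$ is arbitrary the solution extends globally in time. The obstacle specific to the case $p>2$ is twofold: verifying that the regularized diffusion is stable under the half-relaxed limit procedure (which is routine but must be checked because the coefficient is gradient-dependent and degenerates in the limit), and constructing boundary and initial-time barriers adapted simultaneously to the degenerate $p$-Laplacian and to the superquadratic Hamiltonian $|Du|^q$. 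Once these barriers are in place, the SCR--whose proof forms the real technical heart of the paper--does all the remaining work.
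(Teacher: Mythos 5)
Your overall architecture (approximate, take Barles--Perthame half-relaxed limits, use the SCR to get $\overline{u}\leq\underline{u}$ and hence continuity, existence and uniqueness) is a legitimate scheme, but it is not the paper's: the paper simply observes that $\pm\bigl(t\|f\|_{L^\infty}+\|g\|_{L^\infty}+\|u_0\|_{L^\infty}\bigr)$ are global sub- and supersolutions and then invokes Perron's method in its ``up to the boundary'' version (Ishii, Da Lio) together with the parabolic SCR of Proposition \ref{parascr}. The real problem with your version is the very first step. For fixed $\eps>0$ the regularized equation is uniformly parabolic only where the gradient is controlled, and the Hamiltonian $|Du^\eps|^q$ with $q>p\geq 2$ (hence $q>2$) exceeds the natural growth admissible in the Ladyzhenskaya--Solonnikov--Ural'ceva theory relative to the diffusion $(\eps^2+|Du^\eps|^2)^{(p-2)/2}$. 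Consequently there is no global classical solution attaining the Dirichlet datum in the classical sense: as recalled in the introduction, for such superlinear Hamiltonians the gradient generically blows up on $\partial\Omega$ at some finite time $T_{max}(u_0)$ while the solution stays bounded, and the boundary condition is lost afterwards. This is exactly the phenomenon the theorem is meant to handle, so it cannot be assumed away at the level of the approximating problems; your family $(u^\eps)$ need not exist on all of $[0,T]$, and the half-relaxed limits are then not defined there.

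To repair the argument you would have to solve each regularized problem with the \emph{generalized} viscosity Dirichlet condition, but existence for those problems again requires Perron's method (or an equivalent device), at which point one may as well apply Perron directly to the limit equation, as the paper does. The remaining ingredients of your proposal --- the $\eps$-uniform $L^\infty$ bounds from the affine-in-$t$ barriers, the stability of the generalized boundary condition under half-relaxed limits, the recovery of the initial trace, and the final appeal to the SCR for comparison and uniqueness --- are sound and consistent with the paper's use of Lemma 2.1 and Proposition \ref{parascr}; the gap is confined to, but fatal for, the claimed classical global solvability of the regularized Dirichlet problems.
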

As it is classical in viscosity solutions theory, the proof of Theorem~\ref{globalex} relies on a Strong Comparison Result (SCR in short), the existence of the global solution $u$ being an almost immediate consequence of the Perron's method introduced in the context of viscosity solutions by Ishii \cite{ishii} (see also \cite{dalio02}).

The most important difficulties in the proof of Strong Comparison Results come from  the formulation of the boundary condition in the viscosity sense, the discontinuity of the sub and the supersolution to be compared and the strong nonlinearity of the  Hamiltonian term $|D u|^q$.
A key argument in the proof of the SCR in \cite{daba} is the "cone condition" which is useful in the treatment of boundary points. Roughly speaking the ''cone condition'' holds if at any point $(\tilde{x}, \tilde{t})$ of the boundary $\pO\times (0, T)$, an usc  subsolution $u$ satisfies
 $u(\tilde{x}, \tilde{t})=\underset{k\to\infty}{\lim}\,  u(x_k, t_k)$ where $\left\{(x_k, t_k)\right\}_k$ is a sequence of  points of $\Omega\times(0,T)$ with the following properties
$$(x_k,t_k)\to(\tilde{x}, \tilde{t}) \quad\text{and} \quad d_{\pO}(x_k, t_k)\geq b\left(|x_k-\tilde{x}| + |t_k-\tilde{t}|\right),$$
where $b$ is a positive constant.

Our approach is slightly different: instead of directly proving the ''cone condition'' for any viscosity subsolution of \eqref{pb}--\eqref{boundary} as it was done in \cite{daba}, we use a combination of a $C^{0,\beta}$ regularity result for subsolutions of stationary problems, strongly inspired by the result of Capuzzo Dolcetta, Leoni and Porretta \cite{CLP}, together with a regularization by a sup-convolution in time. These arguments provide an approximation of the (a priori only usc) subsolution by a continuous subsolution, which automatically satisfies the ``cone condition'', allowing to borrow the methods of \cite{rouy} to conclude. 

The generalisation of the  $C^{0,\beta}$ regularity result of \cite{CLP} is the following.
\begin{theor}\label{holder}If $u$ is a locally bounded, usc viscosity subsolution
 of 
\begin{equation}\label{strong}
-\text{div} \left(|Du|^{p-2} Du\right) +|D u|^q\leq C \quad \hbox{in  }\Omega\; ,
\end{equation}
where $\Omega$ is an open subset of $\mathbb{R}^N$ and $C$ is a positive constant, and if $q>p\geq 2$, then $u\in C_{loc}^{0, \beta}(\Omega)$ with $\beta=\dfrac{q-p}{q-p+1}$.

Moreover, if $\Omega$ is a bounded domain with a $C^2$-boundary, then $u$ is bounded on $\overline{\Omega}$ and it can be extended as a $C^{0, \beta}$-function on $\overline{\Omega}$ and
\begin{equation}
 |u(x)-u(y)|\leq M |x-y|^{\beta}\quad \hbox{for all  }x, y \in \overline{\Omega},
\end{equation}
for some positive constant $M$ depending only on $p, q$, $C$ and $\partial\Omega$.
\end{theor}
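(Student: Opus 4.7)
The strategy follows the doubling-of-variables scheme of Capuzzo Dolcetta--Leoni--Porretta \cite{CLP}, extended to handle the nonlinear $p$-Laplacian. The driving observation is that $\beta=(q-p)/(q-p+1)$ is the exponent for which the $p$-Laplacian of $|x-y|^\beta$ and the gradient-power $\bigl|D|x-y|^\beta\bigr|^q$ have identical radial scaling near the diagonal, which is what enables a single contradiction to close the argument.

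For the interior estimate, I fix $x_0\in\Omega$ and $R>0$ with $\overline{B_{2R}(x_0)}\subset\Omega$, set $K:=\|u\|_{L^\infty(B_{2R}(x_0))}$, and consider for $M,L>0$ to be chosen
\[
\Phi(x,y)=u(x)-u(y)-M|x-y|^\beta-L\bigl(|x-x_0|^2+|y-x_0|^2\bigr)
\]
on $\overline{B_R(x_0)}^2$. Arguing by contradiction, I assume $\sup\Phi>0$: upper semicontinuity supplies a maximum point $(\bar x,\bar y)$ with $\bar x\neq\bar y$ (else $\Phi(\bar x,\bar x)\leq 0$), and an $L=L(K,R)$ keeps it in the open ball. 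Since $\psi(x):=M|x-\bar y|^\beta+L|x-x_0|^2$ is smooth at $\bar x$ and $u-\psi$ has a local max there, the viscosity subsolution property gives
\[
-\operatorname{div}\bigl(|D\psi(\bar x)|^{p-2}D\psi(\bar x)\bigr)+|D\psi(\bar x)|^q\;\leq\;C.
\]

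Writing $r:=|\bar x-\bar y|$ and using $Mr^\beta\leq 2K$ (which follows from $\Phi(\bar x,\bar y)>0$), one has $r\to 0$ as $M\to\infty$; hence $|D\psi(\bar x)|\geq\tfrac12 M\beta r^{\beta-1}$ and a direct computation of $\Delta_p(M|\cdot-\bar y|^\beta)$ yields $|\operatorname{div}(|D\psi|^{p-2}D\psi)(\bar x)|\leq c_1 M^{p-1}r^{\beta(p-1)-p}$, the $L$-induced cross-terms being genuinely lower-order thanks to $\beta<1$ and $M\gg L$. The crucial algebraic identity
\[
q(\beta-1)\;=\;\beta(p-1)-p\;=\;-\tfrac{q}{q-p+1}
\]
forces both principal terms to carry the same negative power of $r$; the subsolution inequality collapses to
\[
\bigl(c_2 M^q-c_1 M^{p-1}\bigr)\,r^{-q/(q-p+1)}\;\leq\;C+O(L^q),
\]
and since $q>p-1$ and $r\leq(2K/M)^{1/\beta}$, the left-hand side diverges as $M\to\infty$. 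This contradiction forces $\Phi\leq 0$ for large $M$; symmetrising in $x,y$ yields the local H\"older estimate.

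For the up-to-the-boundary statement, I repeat the doubling on $\overline\Omega^2$ with the quadratic penalty replaced by a barrier $L(\chi(x)+\chi(y))$ built from the signed distance $d(\cdot,\pO)$ (which is $C^2$ in a tubular neighbourhood by assumption), chosen so that the maximum of the modified functional is pushed into $\Omega$ via the uniform interior-ball condition. The same exponent-matching computation then simultaneously rules out a positive supremum and unboundedness of $u$ near $\pO$, since unboundedness would supply arbitrarily large values of $u(\bar x)-u(\bar y)$ and drive the same contradiction. The main obstacle throughout is the bookkeeping of cross-terms produced by the nonlinear factor $|D\psi|^{p-2}$ in $\Delta_p$: both the $L|x-x_0|^2$ perturbation in the interior step and the distance-barrier near $\pO$ generate corrections that must be shown to remain strictly lower-order than the $M^q$ and $M^{p-1}$ principal terms at the matched exponent --- a control which is absent in the semilinear case $p=2$ of \cite{CLP}.
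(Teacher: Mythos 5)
Your interior argument takes a genuinely different route from the paper's and, for the first assertion of the theorem, it essentially works. The paper does not double variables: it verifies the hypotheses of the abstract framework of \cite{barle1}, constructing on each ball an explicit radial \emph{strict supersolution} $w_r$ (containing both a $|x|^{\beta}$ term and a $(d^{\beta}(0)-d^{\beta}(x))$ term whose gradient blows up at $\partial B_r$), and then compares the usc subsolution with this continuous supersolution via Proposition \ref{ellipscr}(i). You instead test the subsolution inequality directly at a maximum point; the exponent identity $q(\beta-1)=\beta(p-1)-p$ plays the same role in both arguments, and your computation of the two principal terms is correct. Two remarks. First, since $u$ is only usc, $\Phi(x,y)$ is usc in $x$ but lsc in $y$, so its joint maximum need not be attained (e.g.\ $u(y)=|y|$ for $y\neq0$, $u(0)=1$); this is harmless because you only ever use the subsolution inequality at $\bar x$ --- freeze $y$ and maximize in $x$ alone --- but the step as written is false. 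Second, your constant $M$ necessarily depends on $K=\|u\|_{L^{\infty}(B_{2R}(x_0))}$ (through $Mr^{\beta}\leq 2K$ and $L=L(K,R)$), which is fine for the $C^{0,\beta}_{loc}$ statement but not beyond it.

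The global statement is where the proposal has a real gap. The theorem asserts $|u(x)-u(y)|\leq M|x-y|^{\beta}$ on $\overline{\Omega}$ with $M$ depending only on $p,q,C$ and $\partial\Omega$ --- not on $u$ --- and boundedness of $u$ on $\overline{\Omega}$ is a \emph{conclusion}, so near $\partial\Omega$ there is no sup-norm $K$ to feed into your interior scheme. Your proposed fix (a distance-built penalty ``pushing the maximum into $\Omega$ via the interior-ball condition'') does not identify the mechanism that closes the argument: a penalty that is large near $\partial\Omega$ would reappear in the final estimate and destroy its uniformity, a penalty vanishing at $\partial\Omega$ pushes the maximum the wrong way, and at a maximum point $\bar x\in\partial\Omega$ there is no equation to test. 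What makes the paper's proof work is that the barrier is a strict supersolution \emph{up to and including} the boundary of the ball, in the state-constraint sense: the $(d^{\beta}(0)-d^{\beta}(x))$ term has gradient of order $d^{\beta-1}$, and since $q(\beta-1)=(\beta-2)+(\beta-1)(p-2)$ the coercive term $|Dw|^{q}$ dominates the $d^{\beta-2}$ second-order term it generates, so the comparison on $B_r(x)\setminus\{0\}$ requires no information on $u$ on $\partial B_r(x)$. That is precisely what removes the dependence on $\|u\|_{L^{\infty}}$ and yields both the universal $M$ and the boundedness of $u$ up to $\partial\Omega$; your remark that ``unboundedness would drive the same contradiction'' has no content until such an estimate, with $u$-independent constants, is in place. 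To repair your proof you would have to replace the quadratic localization by such a state-constraint barrier and redo the $p$-Laplacian computation for it --- at which point you have reconstructed the paper's function $w_1$.
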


The regularity result of \cite{CLP} was revisited in \cite{barle1}, where an interpretation was given in terms of state-constraint problems together with several possible applications. Our proof will rely on the arguments of \cite{barle1}.

A second motivation where such regularity results are useful, is the asymptotic behavior as $t\to+\infty$ of solutions of the evolution equation. For this purpose, one has first to study the ergodic (or additive eigenvalue) problem
\begin{equation}\label{erg}
 -\text{div} \left(|D u_{\infty}|^{p-2} D u_{\infty} \right)+|D u_{\infty}|^q-\tilde{f}(x)=c\quad\text{in  }\Omega,
\end{equation}
associated to a state-constraint boundary condition on $\partial\Omega$
\begin{equation}\label{constr}
  -\text{div}  \left(|D u_{\infty}|^{p-2} D u_{\infty} \right)+|D u_{\infty}|^q-\tilde{f}(x)\geq c \quad\text{on  } \partial\Omega.
\end{equation}
We recall that, in this type of problems, both the solution $u_{\infty}$ and the constant $c$
(the ergodic constant) are unknown. First we have the following result.
\begin{theor}\label{ergodic}
Assume that $\Omega$ is  a bounded domain with a $C^2$-boundary, $\tilde{f}\in C(\overline{\Omega})$ and $q >p \geq 2$,    then there exists a unique constant $c$  such that the state-constraints problem \eqref{erg}--\eqref{constr} has a continuous  viscosity solution $u_{\infty}$.
\end{theor}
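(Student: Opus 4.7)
My plan is to apply the classical vanishing discount method of Lions--Papanicolaou--Varadhan, with Theorem~\ref{holder} providing the key compactness. For each $\lambda>0$ I first solve the discounted state-constraint problem
$$\lambda v_\lambda -\mathrm{div}(|Dv_\lambda|^{p-2}Dv_\lambda)+|Dv_\lambda|^q = \tilde f(x)\quad\text{in }\Omega,$$
with the state-constraint condition requiring the supersolution inequality to hold up to $\partial\Omega$. Existence and uniqueness of a continuous $v_\lambda$ should follow from Perron's method together with a comparison result for this stationary state-constraint problem; the required comparison is a one-sided variant of the SCR used for Theorem~\ref{globalex} (only the supersolution has to be controlled up to the boundary, which is strictly easier than the generalized Dirichlet setting). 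Since $\pm\|\tilde f\|_\infty/\lambda$ are respectively a super- and a subsolution of the discounted problem, comparison also yields at once the uniform bound $\|\lambda v_\lambda\|_{L^\infty(\overline\Omega)}\leq\|\tilde f\|_\infty$.

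The uniform H\"older estimate is then a direct consequence of Theorem~\ref{holder}: the subsolution inequality combined with the previous bound gives
$$-\mathrm{div}(|Dv_\lambda|^{p-2}Dv_\lambda)+|Dv_\lambda|^q\leq 2\|\tilde f\|_\infty\quad\text{in }\Omega,$$
so Theorem~\ref{holder} supplies a constant $M$, independent of $\lambda$, such that $|v_\lambda(x)-v_\lambda(y)|\leq M|x-y|^\beta$ on $\overline\Omega$. Fixing $x_0\in\overline\Omega$ and setting $w_\lambda:=v_\lambda-v_\lambda(x_0)$, the family $(w_\lambda)$ is uniformly bounded and equicontinuous on $\overline\Omega$, while $(\lambda v_\lambda(x_0))$ is bounded in $\mathbb R$. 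By Ascoli--Arzel\`a and diagonal extraction, there exist $\lambda_n\to0^+$, $u_\infty\in C(\overline\Omega)$ and $c\in\mathbb R$ with $w_{\lambda_n}\to u_\infty$ uniformly on $\overline\Omega$ and $\lambda_n v_{\lambda_n}(x_0)\to -c$. Since $\lambda_n v_{\lambda_n}=\lambda_n w_{\lambda_n}+\lambda_n v_{\lambda_n}(x_0)\to -c$ uniformly, standard stability of viscosity solutions applied to the equations satisfied by $w_{\lambda_n}$ shows that $u_\infty$ is a continuous viscosity solution of \eqref{erg}--\eqref{constr}.

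For the uniqueness of $c$, I argue by contradiction. Suppose $c_1>c_2$ are two admissible ergodic constants with associated continuous solutions $u_1,u_2$. Then $V_i(x,t):=u_i(x)-c_it$ are viscosity solutions of the parabolic equation
$$v_t-\mathrm{div}(|Dv|^{p-2}Dv)+|Dv|^q = \tilde f(x)\quad\text{in }\Omega\times(0,\infty),$$
with state-constraint condition on $\partial\Omega\times(0,\infty)$. A parabolic comparison principle for this state-constraint problem (again a one-sided variant of the SCR behind Theorem~\ref{globalex}, with $V_1$ the supersolution and $V_2$ the subsolution) would force $\sup_{\overline\Omega\times[0,T]}(V_2-V_1)\leq\sup_{\overline\Omega}(u_2-u_1)$ for every $T>0$, which contradicts the fact that $(V_2-V_1)(x,T)=(u_2-u_1)(x)+(c_1-c_2)T\to+\infty$ as $T\to+\infty$.

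The heart of the proof is the uniform H\"older bound, which is precisely where Theorem~\ref{holder} enters and bypasses the loss of ellipticity of the $p$-Laplacian; once this estimate is in hand, the rest is the soft Ascoli--Arzel\`a argument plus stability. The technical obstacle I foresee lies in establishing the stationary and parabolic comparison principles in the state-constraint setting: these are genuine variants of the SCR of Theorem~\ref{globalex} and require that the subsolution regularisation strategy described in the introduction (H\"older estimate combined with a sup-convolution in time) carries over to problems without a subsolution boundary condition. This should be tractable because the state-constraint setting only removes information from the boundary rather than adding a discontinuous boundary datum to handle.
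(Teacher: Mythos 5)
Your overall strategy (vanishing discount, uniform H\"older bound from Theorem~\ref{holder}, Ascoli plus stability, uniqueness of $c$ by comparison) is the same as the paper's, but there is a genuine gap at the very first step, and it is precisely where the paper invests most of its effort. You claim that the constant $\|\tilde f\|_\infty/\lambda$ is a supersolution of the discounted \emph{state-constraint} problem, and you use this both to run Perron's method and to get the bound $\|\lambda v_\lambda\|_\infty\le\|\tilde f\|_\infty$. This is false: the state-constraint condition requires the supersolution inequality to hold \emph{on} $\partial\Omega$, and a constant does not satisfy it there. Indeed, taking $\phi(x)=\mathrm{const}-A\,d(x)+\tfrac{B}{2}d(x)^2$, which touches a constant from above at $x_0\in\partial\Omega$ relative to $\overline\Omega$, one finds
$$-\mathrm{div}\left(|D\phi|^{p-2}D\phi\right)+|D\phi|^q = -(p-1)A^{p-2}B+A^q+O(A^{p-1}),$$
which is arbitrarily negative for $B$ large; so the required inequality fails for some admissible test functions. (This is a second-order phenomenon: the $p$-Laplacian diffuses in the normal direction, unlike the first-order coercive case where constants do work.) Consequently your Perron argument has no supersolution to start from, and the uniform bound on $\lambda v_\lambda$ is unjustified. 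The missing ingredient is the singular barrier of the paper's Lemma~4.1: $\bar u(x)=-\tfrac{M_1}{\beta}d^{\beta}(x)+\tfrac{M_2}{\lambda}$ with $\beta=\tfrac{q-p}{q-p+1}$, whose gradient blows up like $d^{\beta-1}$ near $\partial\Omega$ so that the coercive term $|D\bar u|^q$ dominates the diffusion (the identity $q(\beta-1)=(p-2)(\beta-1)+(\beta-2)$ is exactly what makes the exponents match); this $\bar u$ is a genuine state-constraint supersolution and yields the (weaker but sufficient) bound $\lambda u_\lambda\le M_2$. The paper also sidesteps Perron for the state-constraint problem altogether: it solves the generalized Dirichlet problem with boundary datum $R$ (Theorem~\ref{fab}), and uses $\bar u$ to show $u_{R,\lambda}<R$ on $\overline\Omega$ for $R>M_2/\lambda$, so that the ``max'' part of the generalized boundary condition forces the state-constraint inequality and $u_{R,\lambda}$ is automatically the desired $u_\lambda$. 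You would need to either reproduce this detour or carry out Perron with the barrier $\bar u$ as supersolution.

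The remainder of your argument is sound and matches the paper: once $|\lambda v_\lambda|\le \tilde C$ is known, the lower bound $v_\lambda\ge-\|\tilde f\|_\infty/\lambda$ makes $v_\lambda$ a subsolution of \eqref{strong} with a $\lambda$-independent constant, Theorem~\ref{holder} gives the uniform $C^{0,\beta}(\overline\Omega)$ bound, and Ascoli plus stability produce $(c,u_\infty)$. For uniqueness of $c$ you use a parabolic comparison with $u_i(x)-c_it$, whereas the paper stays elliptic: if $c_1\neq c_2$ then one of $u^1_\infty,u^2_\infty$ is a strict supersolution of the other's equation, Proposition~\ref{ellipscr} would give a one-sided comparison, and this is contradicted by the invariance under addition of constants. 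Both routes need a comparison result in a state-constraint setting that is only a variant of the stated propositions, so neither is cheaper; the paper's elliptic version avoids introducing the time variable and is the shorter of the two.
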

A typical result  that connects the study of the ergodic problem to the large time behavior of the  solution  $u$ of  \eqref{pb}--\eqref{boundary}  is the following.
\begin{theor}Assume that $\Omega$ is  a bounded domain with a $C^2$-boundary, $u_0\in  C(\overline{\Omega})$, $g\in C(\partial\Omega)$ satisfying  \eqref{compa} and assume that $f(x,t)=\tilde{f}(x)$ with $\tilde{f}\in C(\overline{\Omega})$ and $q>p\geq 2$.  If $(c,u_{\infty})$ is the solution of   \eqref{erg}--\eqref{constr} and if $u$ is the unique viscosity solution of \eqref{pb}--\eqref{boundary}, then $u+c^{+}t$ is bounded, where $c^+ = max(c,0)$. In particular
$$\lim_{t\to\infty} \dfrac{u(x,t)}{t}=-c^+$$
uniformly on $\overline{\Omega}$.
\end{theor}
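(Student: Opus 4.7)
The plan is to compare $u$ with time-affine modifications of the ergodic profile, $v^{\pm}(x,t) := u_\infty(x) - c t \pm C$ for $C > 0$ to be fixed, via the Strong Comparison Result (SCR) underlying Theorem~\ref{globalex}. A standard viscosity test-function argument shows, using the ergodic equation \eqref{erg} and the fact that $v^{\pm}_t = -c$, that $v^{\pm}$ satisfy $v_t - \operatorname{div}(|Dv|^{p-2}Dv) + |Dv|^q - \tilde f = 0$ in $\Omega \times (0,\infty)$ in both the sub- and the supersolution sense. A similar test-function argument converts the state-constraint inequality \eqref{constr} into $v^{\pm}_t - \operatorname{div}(|Dv^{\pm}|^{p-2}Dv^{\pm}) + |Dv^{\pm}|^q - \tilde f \geq 0$ on $\partial\Omega \times (0,\infty)$, so $v^{+}$ is a supersolution of the generalized Dirichlet problem \eqref{pb}--\eqref{boundary} for any $C > 0$. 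Taking $C$ large enough that $u_0 \leq u_\infty + C$ on $\overline\Omega$, the SCR yields the universal upper estimate
\[
u(x,t) \leq u_\infty(x) - c t + C \quad \text{on } \overline\Omega \times [0, \infty).
\]

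For the matching lower bound in the case $c \geq 0$, I would check that $v^{-}$ is a subsolution of \eqref{pb}--\eqref{boundary}. The interior condition is already established, and the subsolution boundary condition $\min(\mathrm{PDE}, v^{-} - g) \leq 0$ is trivially met once $C$ is enlarged so that $v^{-} \leq g$ on $\partial\Omega \times [0,\infty)$, which is possible since $-ct \leq 0$. A further enlargement of $C$ gives $v^{-}(\cdot,0) \leq u_0$, and the SCR produces $u(x,t) \geq u_\infty(x) - ct - C$. Combined with the upper bound, $u + c^{+} t = u + ct$ is bounded on $\overline\Omega \times [0,\infty)$ in this regime, and dividing by $t$ and letting $t \to \infty$ yields $u(x,t)/t \to -c^{+}$ uniformly.

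When $c < 0$ the candidate $v^{-}$ fails (since $-ct = |c|t$ cannot be absorbed into $C$) and the upper bound from $v^{+}$ grows like $|c|t$; a different argument is needed to see that $u$ itself is bounded. The key point is that the strict negativity of $c$ makes $u_\infty - C$ a strict subsolution of the stationary generalized Dirichlet problem $-\operatorname{div}(|Dw|^{p-2}Dw) + |Dw|^q = \tilde f$ in $\Omega$, $w = g$ on $\partial\Omega$. Perron's method, using $u_\infty - C$ as a lower barrier and a classical supersolution built from $g$ plus an appropriate boundary-layer function (so that the coercive term $|Dw|^q$ dominates near $\partial\Omega$, relying on the $C^{0,\beta}$ regularity of Theorem~\ref{holder} to pass to the limit), produces a continuous stationary viscosity solution $w_\infty$. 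Since $w_\infty$ is time-independent, it is also a solution of the parabolic problem \eqref{pb}--\eqref{boundary}, and the SCR applied to $u$ and $w_\infty$ yields $|u(x,t) - w_\infty(x)| \leq \|u_0 - w_\infty\|_\infty$, so $u = u + c^{+}t$ is uniformly bounded and $u(x,t)/t \to 0 = -c^{+}$.

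The main technical obstacles are, first, the rigorous propagation of the viscosity properties of $u_\infty$ (as a state-constraint solution of \eqref{erg}--\eqref{constr}) to those of $v^{\pm}$ as sub/supersolutions of the generalized Dirichlet problem, which requires a careful test-function argument at boundary points; and, second, in the case $c < 0$, the existence of the stationary generalized Dirichlet solution $w_\infty$, which does not follow directly from any result quoted above and whose construction demands Perron barriers compatible with the strongly nonlinear Hamiltonian $|Du|^q$ and the $p$-Laplacian diffusion.
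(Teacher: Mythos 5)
Your treatment of the case $c\geq 0$ is essentially the paper's argument: $u_\infty-ct+C$ is a supersolution thanks to the state-constraint inequality \eqref{constr} (which absorbs the $-c$ coming from the time derivative), $u_\infty-ct-C$ is a subsolution because $-ct\leq 0$ lets you push it below $g$ and $u_0$ once and for all, and the parabolic SCR sandwiches $u+ct$. That part is fine.

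The case $c<0$ contains a genuine gap, which you yourself flag: your argument hinges on the existence of a continuous solution $w_\infty$ of the stationary generalized Dirichlet problem with $\lambda=0$, and nothing in the paper provides it. This is not a routine omission: the comparison result for \eqref{quasistation} with $\lambda=0$ (Proposition \ref{ellipscr}(ii)) requires a \emph{strict} supersolution, the paper explicitly warns that boundary layers can occur for $q>p$, and building a Perron supersolution attaining $g$ continuously would in general require a compatibility condition of the type $|g(x)-g(y)|\leq C|x-y|^\beta$, which is not assumed. The construction is therefore not just "technical"; it may fail as stated. The fix is to notice that you do not need a stationary \emph{solution} at all, only a bounded time-independent \emph{supersolution} of the parabolic problem: the paper takes $w(x)=|x-\bar x|^2+C$ with $\bar x$ chosen so that $B(\bar x,R)\cap\overline\Omega=\emptyset$ for $R$ large. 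Then $|Dw|\geq 2R$ on $\overline\Omega$, the diffusion term $\operatorname{div}(|Dw|^{p-2}Dw)$ is of order $R^{p-2}$ while $|Dw|^q$ is of order $R^{q}$ with $q>p$, so the PDE supersolution inequality holds classically up to the boundary (hence the generalized Dirichlet supersolution condition holds for free), and enlarging $C$ handles the initial data. The matching lower bound is immediate from the observation that $u_\infty-C$ itself is a subsolution when $c\leq 0$ (its PDE defect is exactly $c\leq 0$ in $\Omega$). With these two explicit barriers the SCR gives boundedness of $u$ directly, and $u/t\to 0=-c^+$.
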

The next step in the study of the asymptotic behavior would be to show that $u(x, t)+ct \to u_{\infty}(x)$ as $t\to\infty$ where $u_{\infty}$ solves \eqref{erg}--\eqref{constr}. The main difficulty  to prove such more precise asymptotic behavior comes from the fact that  \eqref{erg}--\eqref{constr} does not admit a unique solution (\eqref{erg}--\eqref{constr} is invariant by addition of constants). Such results were obtained recently in \cite{tchamba} for the uniformly elliptic case $p=2$  through the use of the Strong Comparison Principle (i.e. a result which allows to apply the Strong Maximum Principle to the difference of solutions) and the Lipschitz regularity of $u_{\infty}$. But, for $p > 2$, such Strong Comparison Principle is not available since the equation is quasilinear and not semilinear. We recall that a Strong Maximum Principle is available for $p>2$, see \cite{bardi}. Another difficulty comes from  the proof of a strong comparison result for the steady problem in  case of an operator that does not fulfill  a monotonicity property,  even if there exits a strict subsolution.
Let us mention the works of \cite{laust, balaust} for more results on the asymptotic behavior of global solutions.

Finally we point out that it was shown in \cite{bptt} that the expected asymptotic behavior, namely $u(x, t)+ct \to u_{\infty}(x)$, is not always true in the $p=2$-case when the nonlinearity is sub quadratic in $Du$.

This article is organized as follows: in Section 2, we present the needed results on viscosity solutions for the stationary and evolution problems we consider; in particular, we analyze the losses of boundary conditions for subsolutions. In Section 3 we prove the H\"older regularity result of  Theorem \ref{holder}. In Section 4 we study the ergodic problem.  Section 5 is devoted to  the proof of Theorem \ref{globalex} and  the asymptotic behavior of solutions of the evolution equation.

\section{Preliminaries and Analysis of Boundary Conditions}
In this section we collect some preliminary properties of viscosity subsolutions (the boundary conditions being always understood in the viscosity sense) and we also formulate SCR under different forms, some of them being only useful as a step in the proof of the complete regularity result. These results are concerned with either problem \eqref{pb}--\eqref{boundary} or the following two nonlinear elliptic problem
\begin{equation}\label{operat}
 -(p-1)|Du|^{p-2}\sum_{\lambda_i(D^2 u)>0} \lambda_i(D^2 u)+ |D u|^{q}= C \quad\text{in  }\Omega,\qquad u=\tilde{g}\quad\text{in  }\partial\Omega.
\end{equation}
and
\begin{equation}\label{quasistation}
 -div\left(|D u|^{p-2} D u\right)+|D u|^q+\lambda u-\tilde{f}=0\quad\text{in  } \Omega,\qquad u=\tilde{g}\quad\text{in  }\partial\Omega.
\end{equation}
where  $q>p\geq 2$, $ C, \lambda\geq 0$, $\tilde{f}\in C(\overline{\Omega})$ and $\tilde{g}\in C(\partial \Omega)$.\\

From now on,  we assume that $\Omega$ is a smooth domain with a $C^{2}$-boundary. We define the distance from $x\in\overline{\Omega}$ to $\partial\Omega$ by $d_{\partial\Omega}(x) := \text{dist}\, (x, \partial\Omega)$. For $\delta>0$, we denote by
\begin{eqnarray}
&\Omega^{\delta}:=\left\{x \in\Omega\, |\, d_{\partial\Omega}(x)<\delta\right\},\\
&\Omega_{\delta}:=\left\{x \in\Omega\, |\, d_{\partial\Omega}(x)>\delta\right\}.
\end{eqnarray}
 As a consequence of the regularity of $\partial\Omega$, $d_{\partial\Omega}$ is a $C^2$-function in a neighborhood  $\Omega^{\delta}$ of the boundary for all $0<\delta\leq \delta_0$.
We denote by $d$ a $C^2$-function agreeing with $d_{\partial\Omega}$ in $\Omega^{\delta}$ such that $|D d(x)|\leq 1$ in $\Omega_{\delta}$. We also denote by $n(x)$ the $C^1$-function defined by $n(x)=-D d(x)$ in $\Omega^{\delta}$; if $x \in\partial\Omega$, then $n(x) $ is just the
unit outward normal vector to $\partial\Omega$
 at $x$.
 
 Our first result says that there is no loss of boundary conditions for the subsolutions, namely that the subsolutions satisfy the boundary condition in the classical sense.
\begin{propo}\label{boundcond}
Assume that $q>0$ and $p\geq 2$. We have the following
\begin{enumerate}[i)]
\item
 If  $u$ is a bounded, usc subsolution
 of \eqref{pb}--\eqref{boundary} on a time interval $(0,T)$, then
\begin{equation}
u\leq  g \quad\text{on}\quad \partial\Omega\times(0,T). 
\end{equation}
\item  If $u$ is a bounded, usc subsolution
 of \eqref{operat} or \eqref{quasistation} , then
\begin{equation}
u\leq  \tilde{g} \quad\text{on}\quad \partial\Omega.
\end{equation}
\end{enumerate}
\end{propo}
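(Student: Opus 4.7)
The plan is to argue by contradiction in all three cases, exploiting the dominance of the superlinear Hamiltonian $|Du|^{q}$ over the $p$-Laplace diffusion when tested against a function with a large gradient; this is exactly where the assumption $q>p\geq 2$ enters.

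Consider case (i). Suppose, on the contrary, that $u-g>0$ at some point of $\pO\times(0,T)$. Since $u$ is bounded usc and $g$ is continuous, one can find $(\tilde{x},\tilde{t})\in\pO\times(0,T)$ at which
$\sigma:=u(\tilde{x},\tilde{t})-g(\tilde{x},\tilde{t})=\sup_{\pO\times[\tilde{t}-r,\tilde{t}+r]}(u-g)>0$ for some $r>0$, the supremum being attained by upper semicontinuity on a compact subset of $\pO\times(0,T)$. For $\alpha>0$ large, introduce the penalization
\[
\Phi_{\alpha}(x,t):=u(x,t)-\alpha\, d(x)-|x-\tilde{x}|^{2}-(t-\tilde{t})^{2}
\]
on $\overline{\Omega}\times[\tilde{t}-r,\tilde{t}+r]$, where $d$ is the $C^{2}$ function described above. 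The maximum of $\Phi_{\alpha}$ is attained at some $(x_{\alpha},t_{\alpha})$ by usc, and from $\Phi_{\alpha}(x_{\alpha},t_{\alpha})\geq\Phi_{\alpha}(\tilde{x},\tilde{t})=u(\tilde{x},\tilde{t})$ together with the boundedness of $u$ one gets
\[
\alpha\, d(x_{\alpha})\leq C, \qquad |x_{\alpha}-\tilde{x}|^{2}+(t_{\alpha}-\tilde{t})^{2}\leq C,
\]
so $d(x_{\alpha})\to 0$. Extracting a subsequence, $(x_{\alpha},t_{\alpha})\to(\bar{x},\bar{t})\in\pO\times[\tilde{t}-r,\tilde{t}+r]$ and, by usc of $u$ and continuity of $g$, still $u(\bar{x},\bar{t})-g(\bar{x},\bar{t})=\sigma$; in particular $u(x_{\alpha},t_{\alpha})>g(x_{\alpha},t_{\alpha})+\sigma/2$ for $\alpha$ large.

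Writing $\Phi_{\alpha}=u-\psi_{\alpha}$ with the $C^{2}$ test function $\psi_{\alpha}(x,t):=\alpha\, d(x)+|x-\tilde{x}|^{2}+(t-\tilde{t})^{2}$, the viscosity subsolution inequality at $(x_{\alpha},t_{\alpha})$---in the interior form if $x_{\alpha}\in\Omega$, and in the generalized boundary form if $x_{\alpha}\in\pO$, where the strict inequality $u>g$ rules out the alternative $u-g\leq 0$---gives
\[
(\psi_{\alpha})_{t}-\text{div}\bigl(|D\psi_{\alpha}|^{p-2}D\psi_{\alpha}\bigr)+|D\psi_{\alpha}|^{q}\leq f(x_{\alpha},t_{\alpha}).
\]
The key gradient estimate is as follows: $D\psi_{\alpha}=\alpha\, Dd+2(x-\tilde{x})$, hence $|D\psi_{\alpha}|=\alpha+O(1)$; expanding the $p$-Laplacian in non-divergence form and using the standard identity $D^{2}d\cdot Dd=\tfrac{1}{2}D|Dd|^{2}=0$ (valid where $d$ coincides with $d_{\pO}$), one obtains $\Delta\psi_{\alpha}=O(\alpha)$ and $\langle D^{2}\psi_{\alpha}D\psi_{\alpha},D\psi_{\alpha}\rangle/|D\psi_{\alpha}|^{2}=O(1)$, so $\text{div}(|D\psi_{\alpha}|^{p-2}D\psi_{\alpha})=O(\alpha^{p-1})$. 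Since $(\psi_{\alpha})_{t}=O(1)$ and $|D\psi_{\alpha}|^{q}=\alpha^{q}(1+o(1))$, and $q>p>p-1$, the LHS above blows up like $\alpha^{q}\to+\infty$ while the RHS stays bounded, a contradiction.

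The two elliptic cases in (ii) are handled by the same scheme with the time-independent test function $\psi_{\alpha}(x):=\alpha\, d(x)+|x-\tilde{x}|^{2}$: in \eqref{operat} the positive eigenvalues of $D^{2}\psi_{\alpha}=\alpha D^{2}d+2I$ are $O(\alpha)$, keeping the Pucci-type diffusion at $O(\alpha^{p-1})$, and in \eqref{quasistation} the extra zeroth-order term $\lambda u$ is uniformly bounded. I expect the main technical obstacle to be the careful book-keeping around $(x_{\alpha},t_{\alpha})$: showing that it stays inside the time-localization window and near the boundary, that $|D\psi_{\alpha}|$ really is of order $\alpha$ there (so the test function is admissible and stays away from the degeneracy $D\psi=0$ of the $p$-Laplacian), and that the strict inequality $u>g$ survives in the limit so that the generalized boundary condition on $\pO$ can be reduced to the PDE part.
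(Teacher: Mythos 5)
Your overall strategy---an explicit barrier $\alpha\,d(x)$ plus a localization term, letting the superlinear Hamiltonian beat the diffusion---is a legitimate alternative to the paper's proof, which instead invokes Da Lio's characterization \cite{dalio1} of boundary points where the Dirichlet condition is lost and merely checks that the resulting algebraic condition fails. But as written your argument has a genuine gap in the localization. With the penalization $|x-\tilde{x}|^{2}+(t-\tilde{t})^{2}$ taken with coefficient $1$, the inequality $\Phi_{\alpha}(x_{\alpha},t_{\alpha})\geq\Phi_{\alpha}(\tilde{x},\tilde{t})$ only gives $|x_{\alpha}-\tilde{x}|^{2}+(t_{\alpha}-\tilde{t})^{2}\leq \sup u-u(\tilde{x},\tilde{t})$, a fixed constant; since $u$ is merely bounded usc, $(x_{\alpha},t_{\alpha})$ need not approach $(\tilde{x},\tilde{t})$. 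Consequently neither your claim $u(\bar{x},\bar{t})-g(\bar{x},\bar{t})=\sigma$ nor the conclusion $u(x_{\alpha},t_{\alpha})>g(x_{\alpha},t_{\alpha})+\sigma/2$ follows: the limit point could be a boundary point where $g$ is much larger (the definition of $\sigma$ as a supremum only yields $u(\bar{x},\bar{t})-g(\bar{x},\bar{t})\leq\sigma$, not equality), and $t_{\alpha}$ could sit at the endpoint $\tilde{t}-r$ of the time window where no subsolution inequality is available. This is precisely the step needed to discard the alternative $u-g\leq 0$ in the generalized boundary condition, so it cannot be left as ``book-keeping''. The standard repair is to use upper semicontinuity of $u$ at $(\tilde{x},\tilde{t})$ to pick $r$ with $u\leq u(\tilde{x},\tilde{t})+\delta$ on the closed $r$-neighborhood, and to penalize with a coefficient $K$ satisfying $Kr^{2}>\mathrm{osc}(u)$, which forces $|x_{\alpha}-\tilde{x}|^{2}+(t_{\alpha}-\tilde{t})^{2}\leq\delta/K$.

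A second, smaller point: your barrier is linear in $d$, so $D^{2}\psi_{\alpha}=\alpha D^{2}d+2I$ and the diffusion term is $O(\alpha^{p-1})$ of uncontrolled sign (for \eqref{operat} it is genuinely $-C\alpha^{p-1}$); you therefore need $q>p-1$ for $|D\psi_{\alpha}|^{q}=\alpha^{q}(1+o(1))$ to win. The proposition is stated for all $q>0$ and $p\geq 2$, and the paper's proof covers this full range because Da Lio's condition is tested against a function that is concave in $d$ (Hessian of order $-\alpha^{-2}Dd\otimes Dd$), so the diffusion itself contributes the positive term $(p-1)\alpha^{-p}$ and there is no competition between the two terms. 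For the range $q>p\geq 2$ used everywhere else in the paper your computation suffices, but it does not prove the proposition as stated.
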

\begin{proof} We only give the proof  for the time dependent problem, the proof for the stationnary problems being similar.  We use a result of Da Lio \cite[Corollary 6.2]{dalio1}.  We denote by $\mathcal{S}^N$ the space of real symmetric $N\times N$ matrices. For $x\in\Omega$, $t\in (0,T)$, $\xi\in\mathbb{R}^N$ and $M\in\mathcal{S}^N$, we define the function $F$ by
$$F(x, t, \xi, M)=-|\xi|^{p-2}Tr(M)-(p-2)|\xi|^{p-4}\left\langle M\xi, \xi\right\rangle+|\xi|^q- f(x,t),$$
so that the equation can be written as $u_t+F(x, t, Du, D^2u)=0$.
From \cite{dalio1}, we know that, if $u(x_0, t_0)> g (x_0, t_0)$ at some point $(x_0, t_0)\in\partial\Omega\times (0, T)$, then the following conditions hold
\begin{eqnarray}
&&\underset{\substack{(y,t)\to(x_0,t_0)\\
\alpha\downarrow 0}}{\lim \inf}\left\{\left[\dfrac{o(1)}{\alpha}+ F\left(y,t,\dfrac{D d(y)+o(1)}{\alpha}, -\dfrac{ D d(y)\otimes D d(y)+ o(1)}{\alpha^2}\right)\right]\right\}\leq 0\nonumber\\
&&\underset{\substack{
(y,t)\to(x_0,t_0)\\
\alpha\downarrow 0
}}{\lim \inf} \left\{\left[\dfrac{o(1)}{\alpha}+ F\left(y,t,\dfrac{D d(y)+o(1)}{\alpha}, \dfrac{ D^2 d(y)+ o(1)}{\alpha}\right)\right]\right\}\leq 0.
\end{eqnarray}
But the first condition cannot hold since
$$F\left(y,t,\dfrac{D d(y)+o(1)}{\alpha}, -\dfrac{ D d(y)\otimes D d(y)+ o(1)}{\alpha^2}\right)\geq \dfrac{(p-1)}{\alpha^p}\left(1+o(1)\right)+ \dfrac{1-o(1)}{\alpha^q}-f(y,t),$$
 and the right hand side is going to $+ \infty$ as $\alpha\to 0$ since $p\geq 2$, $q>0$ and  all terms converge to $+\infty$.
\end{proof}
Let us point out that the above computation shows that there is no competition between the nonlinear Hamiltonian term and the  slow diffusion operator since they both produce positive contribution which prevent any loss of boundary conditions for the subsolution.\\

Next, we remark that there  cannot be loss of initial condition.
\begin{lem}
Assume that $q>p\geq2$, $f\in C\left( \overline{\Omega}\times [0,T]\right)$ and $u_0\in C(\overline{\Omega})$, $g\in C\left(\partial\Omega\times [0,T]\right)$ satisfy \eqref{compa}. Let $u$ and  $v$ be respectively a bounded usc  viscosity subsolution
 and a bounded lsc super-solution of\eqref{pb}--\eqref{boundary} then
\begin{equation}
\label{initcond}
u(x,0)\leq u_0(x)\leq v(x,0)\quad
\text{on}\quad\overline{\Omega}.
\end{equation}
\end{lem}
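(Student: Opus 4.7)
The plan is to prove $u(x,0) \le u_0(x)$ on $\overline\Omega$ by a local upper barrier argument at each point; the opposite inequality $u_0(x) \le v(x,0)$ will follow by a symmetric lower barrier construction. Fix $y \in \overline\Omega$ and $\eta>0$; the target is $u(y,0) \le u_0(y)+\eta$, after which letting $\eta\to0$ closes the argument.

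The barrier I would use is
\begin{equation*}
\phi(x,t) = u_0(y) + \eta + C|x-y|^2 + Kt + \frac{\varepsilon}{T_0-t},
\end{equation*}
on a cylinder $Q := (\Omega\cap B(y,r_0))\times[0,T_0)$. The constants are chosen in order. First, $r_0$ is taken small enough that continuity of $u_0$ gives $u_0(x)\le u_0(y)+\eta$ on $\overline{B(y,r_0)}\cap\overline\Omega$; when $y\in\partial\Omega$, the compatibility $g(y,0)=u_0(y)$ and continuity of $g$ let me further shrink $r_0,T_0$ to obtain $g(x,t)\le u_0(y)+\eta$ on $(\partial\Omega\cap\overline{B(y,r_0)})\times[0,T_0]$; when $y\in\Omega$, I also require $\overline{B(y,r_0)}\subset\Omega$. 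Next, $C$ is taken large enough that $\phi\ge\|u\|_\infty$ on $\partial B(y,r_0)\cap\overline\Omega$. Finally, $K$ is chosen so that the direct computation
\begin{equation*}
\phi_t - \text{div}(|D\phi|^{p-2}D\phi) + |D\phi|^q - f = K - (2C)^{p-1}(N+p-2)|x-y|^{p-2} + (2C)^q|x-y|^q - f > 0
\end{equation*}
on $Q$, which uses crucially that $p\ge2$ keeps $|D\phi|^{p-2}$ bounded up to $x=y$, making $\phi$ a strict classical supersolution. The term $\varepsilon/(T_0-t)$ only strengthens the supersolution inequality and forces $u-\phi\to-\infty$ as $t\to T_0^-$, so the supremum $M:=\sup_{\overline Q}(u-\phi)$ is attained at some $(\bar x,\bar t)$ with $\bar t<T_0$.

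The comparison step is by contradiction from $M>0$. If $(\bar x,\bar t)$ is interior to $Q$, the standard viscosity subsolution test applied to $u-\phi$ at an interior max contradicts the strict supersolution property of $\phi$. If $\bar x\in\partial B(y,r_0)\cap\overline\Omega$, the choice of $C$ gives $\phi\ge\|u\|_\infty\ge u$, excluding this case. If $\bar x\in\partial\Omega\cap\overline{B(y,r_0)}$, Proposition~\ref{boundcond} yields $u\le g\le u_0(y)+\eta\le\phi$, again excluding it.

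The main obstacle — and the conceptual core of the lemma — is the remaining case $\bar t=0$, which requires the relaxed viscosity formulation of the initial condition for a subsolution: at $(\bar x,0)$ one has $\min\!\bigl(\phi_t+F(\cdot,D\phi,D^2\phi),\ u(\bar x,0)-u_0(\bar x)\bigr)\le 0$. Since $\phi(\bar x,0)\ge u_0(y)+\eta+C|\bar x-y|^2\ge u_0(\bar x)$ by the choice of $r_0$, the hypothesis $u(\bar x,0)>\phi(\bar x,0)$ rules out the second alternative; so the PDE subsolution inequality must hold at $(\bar x,0)$, again contradicting the strict supersolution property of $\phi$. Thus $M\le0$, and evaluating at $(y,0)$ gives $u(y,0)\le u_0(y)+\eta+\varepsilon/T_0$; letting $\varepsilon\to0$ then $\eta\to0$ concludes. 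All remaining work is standard barrier bookkeeping exploiting, as in Proposition~\ref{boundcond}, that both the degenerate diffusion and the Hamiltonian $|Du|^q$ have the right sign to prevent loss of the initial trace.
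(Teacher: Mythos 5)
Your proof is correct and follows essentially the same strategy as the paper's: localize near $(y,0)$ with a penalization whose time-slope is taken large enough to rule out the PDE alternative in the relaxed formulation, so that at the maximum point only the relaxed initial condition $u\le u_0$ (or the boundary condition $u\le g$, handled via compatibility) can hold. The only differences are cosmetic — you use a quadratic-in-space barrier on a small cylinder and verify the strict supersolution property explicitly, whereas the paper penalizes with $|x-x_0|/\varepsilon^2+C_\varepsilon t$ globally and cites the argument of Barles--Rouy — so the proposal is a valid, slightly more self-contained rendering of the same proof.
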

\textbf{Proof} Fix $x_0\in\overline{\Omega}$ and define for $\varepsilon>0$ and $C_{\varepsilon}>0$ the function $\phi_{\varepsilon}(x,t)$ by
$$ \phi_{\varepsilon}(x,t) =u(x,t)-\dfrac{|x-x_0|}{\varepsilon^2}-C_{\varepsilon} t. $$
This function attains  a global maximum on $\overline{\Omega}\times [0, T)$ at $x_{\eps}, t_{\eps}$.
Using the boundedness of $u$, it is easy to see that, for any $C_{\eps}>0$, $(x_{\eps}, t_{\eps})\to (x_0, 0)$ as $\eps\to0$. Arguing as in \cite{rouy}, choosing $C_{\eps}$ sufficiently large depending on $\eps$, we are left with $(x_{\eps}, t_{\eps})\in \left(\partial\Omega\times (0, T)\right)\cup \left(\overline{\Omega}\times\left\{0\right\}\right)$ and the two following possibilities
\begin{eqnarray*}
\text{either}\quad t_{\eps}=0 \quad&\text{and}&\quad u(x_{\eps}, 0)\leq u_0(x_{\eps}),\\
\text{or}\quad t_{\eps}>0, x_{\eps}\in\pO \quad&\text{and}&\quad u(x_{\eps}, t_{\eps})\leq g(x_      {\eps}, t_{\eps}).
\end{eqnarray*}

In either case, since $u(x_0, 0)\leq \phi_{\varepsilon}(x_{\eps},t_{\eps})\leq u(x_{\eps}, t_{\eps})$, we  get the desired  result for $u$ letting $\eps\to 0$ and using the continuity of $u_0$ and $g$. The argument for $v$ is similar.
\bigskip

Now we claim that under some assumptions  (set out below), a SCR holds for semicontinuous viscosity sub-and supersolutions of  \eqref{pb}--\eqref{boundary} or \eqref{operat} or \eqref{quasistation}. The proof being somehow technical we refer the reader to the appendice for a detailed proof of the  following two propostions.

\begin{propo}[\textbf{Parabolic SCR}]\label{parascr}
 Assume that $q>p\geq2$, $f\in C\left( \overline{\Omega}\times [0,T]\right)$ and $u_0\in C(\overline{\Omega})$, $g\in C\left(\partial\Omega\times [0,T]\right)$ satisfy \eqref{compa}. Let $u$ and  $v$ be respectively a bounded usc  viscosity subsolution
 and a bounded lsc super-solution of \eqref{pb}--\eqref{boundary}, then $u\leq v$ in $\Omega\times [0, T]$. Moreover, if we define $\tilde{u}$ on $\overline{\Omega}\times [0, T]$ by setting
\begin{equation}
\tilde{u}(x,t):=\left\{\begin{array}{ll}
\underset{\substack{(y,s)\to (x,t)\\
(y,s)\in\Omega\times(0,T)}}
{\lim\sup u(y,s)} &\quad\text{for all} \, (x,t)\in\partial\Omega\times(0,T]\\
u(x,t) &\qquad \text{otherwise},
\end{array}
\right.
\end{equation}
then $\tilde{u}$ remains an usc subsolution
 of \eqref{pb}--\eqref{boundary} and
\begin{equation}\label{compar}
\tilde{u}\leq v \quad\text{on  }\overline{\Omega}\times[0, T].
\end{equation}
\end{propo}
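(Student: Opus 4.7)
The overall plan is to follow the regularization strategy outlined in the introduction: approximate the (a priori only usc) subsolution $u$ by a continuous subsolution $u_\eps$ that satisfies the cone condition automatically, and then run a doubling-of-variables argument in the spirit of \cite{rouy}. Two preliminary reductions are already in hand: Proposition~\ref{boundcond} gives $u\leq g$ classically on $\pO\times(0,T)$, so there is no loss of boundary condition on the subsolution side; and the preceding lemma gives $u(\cdot,0)\leq u_0\leq v(\cdot,0)$. Thus the only delicate geometry is at the lateral boundary, where the supersolution $v$ may itself lose the boundary condition.

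For the regularization, I would first sup-convolve $u$ in time,
$$u_\eps(x,t):=\sup_{s\in[0,T]}\Bigl\{u(x,s)-\frac{(t-s)^2}{\eps^2}\Bigr\},$$
which is Lipschitz in $t$ on compact subsets of $(0,T)$ and, by a standard argument, remains an usc subsolution of the same equation with $f$, $g$ and $u_0$ replaced by perturbations of order $o_\eps(1)$. For each fixed $t$, $u_\eps(\cdot,t)$ is then a bounded usc subsolution of a stationary inequality of the form \eqref{strong}, so Theorem~\ref{holder} gives $u_\eps(\cdot,t)\in C^{0,\beta}(\overline\Omega)$ with $\beta=(q-p)/(q-p+1)$, uniformly in $t$. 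Together with the Lipschitz continuity in $t$, this yields $u_\eps\in C(\overline\Omega\times[0,T])$.

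Once $u_\eps$ is continuous up to the boundary, the cone condition is automatic by the $C^2$-regularity of $\pO$: any lateral boundary point is approached by interior points inside a non-tangential cone, and continuity transfers the values. I would then prove $u_\eps\leq v$ on $\overline\Omega\times[0,T]$ by the standard doubling of variables with penalization terms of the form $|x-y|^\gamma/\delta^\gamma+(t-s)^2/\eta^2$ (the exponent $\gamma$ tuned to the $p$-Laplacian) and a time drift $-\alpha t$ producing a strict subsolution. At the maximum point there are three subcases: the initial slice, handled by the initial-condition lemma; the lateral boundary, where Proposition~\ref{boundcond} gives $u_\eps\leq g$ and the cone property lets us move to interior approximating sequences on which $v$ satisfies its equation; and the fully interior case, handled by the theorem of sums applied to the operator $F(x,t,\xi,M)=-|\xi|^{p-2}\operatorname{Tr}(M)-(p-2)|\xi|^{p-4}\langle M\xi,\xi\rangle+|\xi|^q-f(x,t)$ already introduced in the proof of Proposition~\ref{boundcond}. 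Sending $\alpha,\eta,\delta\to 0$ and then $\eps\to 0$ yields $u\leq v$ in $\Omega\times[0,T]$.

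The main technical obstacle will be the interior matching between the degenerate diffusive term and the super-$p$-linear Hamiltonian when the gradients produced by the penalization blow up like $\delta^{-1}$: the exponent $\gamma$ and the H\"older exponent $\beta=(q-p)/(q-p+1)$ provided by Theorem~\ref{holder} are calibrated precisely so that $|Du|^q$ on the subsolution side dominates the diffusive remainder on the supersolution side at the right scale, letting us close the estimate. For the ``moreover'' statement, note that the $C^{0,\beta}$ extension of $u_\eps(\cdot,t)$ to $\overline\Omega$ is the unique continuous extension from the interior, and thus coincides with the interior upper limit; consequently the boundary values of $\lim_{\eps\to 0}u_\eps$ are precisely $\tilde u$ on $\pO\times(0,T]$. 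That $\tilde u$ remains a viscosity subsolution follows from the standard stability of the subsolution property under taking upper envelopes, and applying the same comparison argument to $\tilde u$ and $v$ produces \eqref{compar}.
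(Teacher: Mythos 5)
Your overall architecture matches the paper's: sup-convolution in time, Hölder regularity in space via Theorem~\ref{holder} (using the time-Lipschitz bound on $u^\alpha_t$ to reduce to a stationary inequality of type \eqref{strong}), continuity up to the boundary replacing the cone condition, and then a Rouy-type doubling of variables with the boundary cases handled by Proposition~\ref{boundcond} and the initial-condition lemma. Two points, however, are not mere technicalities but the actual content of the argument, and your proposal leaves both unresolved.

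First and most importantly, you explicitly flag ``the main technical obstacle'' --- controlling the difference of the $|Du|^q$ terms when the penalization gradients $q_1,q_2$ blow up --- but you do not supply the mechanism that closes it; ``the exponents are calibrated precisely so that\dots'' is not a proof, and in fact a calibration of $\gamma$ against $\beta$ cannot work: the Hölder bound only gives $|q_1|\lesssim \delta^{\gamma(\beta-1)/(\gamma-\beta)}\to\infty$, so the error $O(\eps)|q_1|^{q-p+2}$ coming from $|q_1-q_2|\leq C\eps(|q_1|\wedge|q_2|)$ is unbounded and a time drift $-\alpha t$ (which only produces $a-b\geq\alpha$) cannot absorb it. The paper's device is to compare $\mu u^\alpha$ with $v$ for $\mu<1$: since $q>p-1$, the rescaled subsolution inequality produces the strictly negative term $(1-\mu^{p-1-q})|q_1|^{q-p+2}$ (after factoring out $|q_1|^{p-2}$ and renormalizing by $|q_2|^{p-2}/|q_1|^{p-2}=1+O(\eps)$), which dominates the $O(\eps)|q_1|^{q-p+2}$ error for fixed $\mu$; one then sends $\eps\to0$ first and $\mu\to1$ last, with the separate cases $|q_i|\geq\gamma$, $q_i\to0$, and $q_1=q_2=0$ needed because of the degeneracy at $\xi=0$. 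Second, your lateral-boundary case is not correctly organized: the supersolution $v$ is the function that may fail its PDE inequality on $\pO$, and the way to recover it is to observe that $u\leq g$ plus positivity of the maximum forces $v<g$ at the limit point; meanwhile the subsolution point must be pushed strictly inside $\Omega$, which is achieved not by the cone condition per se but by inserting the normal shift $\chi\left(\frac{z+w}{2}\right)$ into the quartic penalization term, a term absent from your test function. Without the $\mu$-trick and the shifted penalization the comparison does not close, so the proposal as written has a genuine gap at its core.
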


The stationary version of the SCR is used either in the proof of the $C^{0,\beta}$-regularity or for solving the ergodic problem.
\begin{propo}[\textbf{Elliptic SCR}]\label{ellipscr}
 Assume that $q>p\geq2$, $\tilde{f}\in C\left( \overline{\Omega}\right)$ and  $\tilde{g}\in C\left(\partial\Omega\right)$. 

\begin{enumerate}[(i)]
\item 
Let $u$ and $v$ be respectively a bounded usc  viscosity subsolution
 and a bounded lsc super-solution of \eqref{operat}. If $v$ is continuous on $\overline{\Omega}$ and is  a strict supersolution of \eqref{operat}, then
\begin{equation}\label{regcom}
u \leq v \quad
\text{on}\quad\overline{\Omega}.
\end{equation} 
\item
Let $u$ and  $v$ be respectively a bounded usc  viscosity subsolution
 and a bounded lsc super-solution of \eqref{quasistation}. Assume that either $\lambda>0$ or $\lambda=0$ and $v$ is  a strict supersolution. We define $\tilde{u}$ on $\overline{\Omega}$ by setting
\begin{equation}
\tilde{u}(x):=\left\{\begin{array}{ll}
\underset{\substack{y\to x\\
y\in\Omega}}
{\lim\sup u(y)} &\quad\text{for all  } x\in\partial\Omega\\
u(x) &\qquad \text{otherwise},
\end{array}
\right.
\end{equation}
then $\tilde{u}$ remains an usc subsolution
 of \eqref{pb}--\eqref{boundary} and
\begin{equation}\label{ellipcompar}
\tilde{u}\leq v \quad
\text{on}\quad\overline{\Omega}.
\end{equation} 
\end{enumerate}
\end{propo}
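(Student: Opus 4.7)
The plan is to argue by contradiction via the doubling-of-variables technique, treating parts~(i) and~(ii) in parallel since only the closing estimate differs. Assume that $M:=\sup_{\overline{\Omega}}(u-v)>0$ (respectively $\sup_{\overline{\Omega}}(\tilde u-v)>0$ in case~(ii)); in both situations the supremum is attained at some $x_0\in\overline\Omega$, because $u$ (resp.\ $\tilde u$) is upper semicontinuous, $v$ is lower semicontinuous, and $\overline\Omega$ is compact. The first task is to rule out boundary maxima: Proposition~\ref{boundcond}(ii) yields $u\le\tilde g$ classically on $\partial\Omega$, and the viscosity boundary inequality for $v$ gives either the pointwise estimate $v\ge\tilde g$ at $x_0$ (directly contradicting $M>0$) or the equation in the viscosity sense up to the boundary, in which case one proceeds with the doubling argument on all of $\overline\Omega$.

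I would then double the variables with a function of the form
$$\Phi_\eps(x,y)=u(x)-v(y)-\frac{|x-y|^s}{\eps^s}-\eta|x-x_0|^2,$$
with small $\eta>0$ and an exponent $s$ adapted to the homogeneity of the $p$-Laplacian. Standard arguments furnish maximum points $(x_\eps,y_\eps)\to(x_0,x_0)$, a common test gradient $\xi_\eps$ equal to the gradient of the penalization at $x_\eps-y_\eps$, and symmetric matrices $X_\eps\le Y_\eps$ produced by the Crandall--Ishii lemma. Inserting these jets into the viscosity inequalities and writing the $p$-Laplacian operator in viscosity form, the diffusion contributions combine to a non-negative quantity since $p\ge 2$ and $X_\eps\le Y_\eps$, while the Hamiltonian terms $|\xi_\eps|^q$ cancel exactly because both inequalities share the same test gradient; only the zeroth-order terms survive, and these vanish in the limit by continuity of $\tilde f$ (augmented in case~(ii) by $\lambda(u(x_\eps)-v(y_\eps))\to\lambda M$).

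The contradiction is closed as follows: in case~(i) the strict supersolution hypothesis provides an extra positive slack $\delta>0$, which persists through the limit and violates $0\ge\delta$; in case~(ii) with $\lambda>0$ the monotone term $\lambda M>0$ plays the same role, and if $\lambda=0$ one again invokes strictness of $v$. In case~(ii) it is also necessary to verify that the envelope $\tilde u$ remains an upper semicontinuous viscosity subsolution on $\overline\Omega$; this follows from the fact that any test function touching $\tilde u$ from above at a boundary point can be perturbed to one touching $u$ at nearby interior points where the inequality is known to hold, and lower semicontinuity of the operator in its arguments passes the inequality to the limit.

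The principal obstacle is the quasilinear degenerate character of the $p$-Laplacian: when $|\xi_\eps|$ is small the diffusion term in viscosity form degenerates like $|\xi_\eps|^{p-2}$, and one cannot exploit any semilinear monotonicity of the operator acting on $u-v$. This is precisely why a strict supersolution, or the monotone zeroth-order term $\lambda u$, is indispensable. A secondary technical difficulty is choosing the penalization exponent $s$ so that the Crandall--Ishii bounds on $X_\eps,Y_\eps$ couple favorably with the $|\xi_\eps|^{p-2}$ weight; I expect that combining the superlinear term $|\xi_\eps|^q$, the coercive part of the diffusion and, where needed, a supplementary strict-supersolution perturbation of $v$ will absorb the remaining errors and yield the contradiction.
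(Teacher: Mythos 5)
Your overall strategy (doubling of variables, contradiction, strictness or $\lambda>0$ to close the estimate) belongs to the right family of arguments, but two essential devices of the actual proof are missing, and without them the argument does not go through. First, the boundary. For the generalized Dirichlet problem the subsolution condition on $\partial\Omega$ is $\min(F[u],u-\tilde g)\le 0$; Proposition \ref{boundcond} tells you that $u\le\tilde g$ holds pointwise there, and precisely because of that the boundary condition is satisfied \emph{without} yielding any PDE inequality for $u$ at boundary points. Your penalization $|x-y|^s/\eps^s+\eta|x-x_0|^2$ does nothing to prevent the subsolution test point $x_\eps$ from landing on $\partial\Omega$, where you cannot write the viscosity inequality for $u$ (your dichotomy only treats the supersolution side). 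The paper forces $x_\eps$ into $\Omega$ by inserting the inward normal shift $\xi\left(\frac{x+y}{2}\right)$ into the penalization, so that $x_\eps=y_\eps-\eps\xi(y_\eps)+o_{\eps}(1)$ is interior; and this is exactly the step where the continuity of $v$ in case (i), or the H\"older continuity of $u$ supplied by Theorem \ref{holder} in case (ii), is indispensable to evaluate $\Phi_\eps$ at the shifted diagonal point and recover $\Phi_\eps(x_\eps,y_\eps)\to M$.

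Second, once that shift is in place the two test gradients no longer coincide: one only has $|q_1-q_2|\le C\eps(|q_1|\wedge|q_2|)$. Hence your claim that ``the Hamiltonian terms cancel exactly'' fails; the mismatch produces an error of order $\eps|q_1|^{q}$ (equivalently $O(\eps)|q_1|^{q-p+2}$ after factoring out $|q|^{p-2}$), which need not vanish as $\eps\to0$ because $|q_1|$ may blow up, and neither a fixed strictness margin $\delta$ nor the fixed quantity $\lambda M$ can absorb it. The paper's remedy is to compare $\mu u$ (with $0<\mu<1$ close to $1$) with $v$: since $q>p-1$, the rescaled Hamiltonian carries the factor $\mu^{p-1-q}>1$ and generates the strictly negative term $(1-\mu^{p-1-q})|q_1|^{q-p+2}$, which dominates the $O(\eps)|q_1|^{q-p+2}$ error for fixed $\mu$ as $\eps\to0$; one then lets $\mu\to1$. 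This $\mu$-trick, combined with the normal shift and the regularity of one of the two functions, is the core of the proof and is absent from your proposal.
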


\section{H\"older Regularity of Viscosity Subsolutions for the Degenerate Elliptic Problem}

In this section we are going to prove that  equation of type \eqref{strong}  enters into the general framework described in \cite{barle1} which allows us to state that, if  $u$ is a locally bounded, usc viscosity  subsolution
 of \eqref{strong}, then $u$ is H\"{o}lder continuous with exponent $\beta=\dfrac{q-p}{q-p+1}$. The key point is that the strong growth of the first order term balances the degeneracy of the  second order term, providing a control on $|Du|$.

\medskip

\noindent\textbf{Proof of Theorem \ref{holder}.} If $u$ is a subsolution
 of \eqref{strong}, then it is a subsolution
 in $B_r(x)=\left\{ y\in \mathbb{R}^N; \, |y-x|<r\right\}$ of the simpler equation
$$-(p-1)|Du|^{p-2}\sum_{\lambda_i(D^2 u)>0} \lambda_i(D^2 u)+ |D u|^{q}\leq C.$$

Now we are going to check the required hypotheses in \cite{barle1}.
\medskip

\textbf{H1.} For $0<r<1$,  $s\in\mathbb{R}^N$ and $M\in \mathcal{S}^N$, $\mathcal{S}^N$ denoting the space of $N\times N$ real valued symmetric matrices, define
the function $G_r(s,M)$ by 
$$G_r(s,M):=-(p-1) |s|^{p-2}\sum_{\lambda_i(M)>0} \lambda_i(M)+ |s|^{q}- C.$$
Then, for any $x\in\Omega$ with $d_{\partial\Omega} (x)\geq  r$, $G_r(Du, D^2 u)\leq 0$ in $B_r(x)$.
\medskip

\textbf{H2.} There exists a super-solution up to the boundary  $w_r\in C\left(\overline{B_r(0)}\right)$ such that $w_r(0)=0$, $w_r(x)\geq 0$ in $B_r(x)$ and 
\begin{equation}
G_r\left( D w_r, D^2 w_r\right)\geq \eta_r>0 \quad\text{on}\quad \overline{B_r(0)}\backslash\left\{0\right\},
\end{equation}
for some $\eta_r>0$.\\

Despite the  construction of the functions $w_r$ is a rather easy adaptation of \cite{barle1}, we reproduce it for the sake of completeness and for the reader's convenience.
In order to build  $w_r$,  we first build $w_1$ and then use the scale invariance of the equation. To do so, we borrow arguments from \cite{barle1}. For $C_1,C_2>0$ to be chosen later on and for $\beta=\dfrac{q-p}{q-p+1}$, we consider the function
$$w_1(x):=\dfrac{C_1}{\beta}|x|^{\beta}+ \dfrac{C_2}{\beta}\big( d^{\beta}(0)-d^{\beta}(x)\big),$$
where $d(x)=1-|x|$ on $B_1(0)\backslash B_{1/2}(0)$ and we regularize it in $B_{1/2}(0)$ by changing it into $h(1-|x|)$ where $h$ is a smooth, non-decreasing and concave  function such that $h(s)$ is constant for $s\geq 3/4$ and $h(s)=s$ for $s\leq 1/2$.
 Obviously we have $w_1(0)=0$, $w_1\geq 0$ in $\overline{B_1(0)}$ and $w_1$ is smooth in $\overline{B_1(0)}\backslash\left\{0\right\}$.
 
We first remark that $-(p-1)|D w_1 (x)|^{p-2}\lambda_i(D^2 w_1(x)) +|D w_1(x)|^q$ can be written as
$$
|D w_1 (x)|^{p-2}\left[-(p-1)\lambda_i(D^2 w_1(x)) +|D w_1(x)|^{q-p+2}\right]\; .
$$
Therefore, in order to prove the claim, we are going to show that, for $C_1,C_2>0$ large enough, the bracket is positive and bounded away from $0$ and that $|D w_1 (x)|^{p-2}$ remains large.

Computing the derivatives of $w_1$ in $B_1(0)\backslash\left\{0\right\}$, we have
\begin{eqnarray*}
D w_1(x)&=& C_1|x|^{\beta-2}x- C_2d^{\beta-1}(x)D d(x),\\
D^2 w_1(x)&=& C_1|x|^{\beta-2} Id +(\beta-2)C_1|x|^{\beta-4} x\otimes x\\
&-& C_2 d^{\beta-1}(x) D^2 d(x)-(\beta-1) C_2 d^{\beta-2}(x) D d(x)\otimes D d(x).
\end{eqnarray*}
Using that $-D d(x)=\mu(x) x$ for some  $\mu(x)\geq 0$ and that  $q>p>2$, we have
\begin{eqnarray*}
|D w_1(x)|^{q-p+2}&=&\left(|C_1|x|^{\beta-2}x|+| C_2 d^{\beta-1}(x) D d(x)|\right)^{q-p+2}\\
             &\geq& | C_1|x|^{\beta-2}x|^{q-p+2}+| C_2 d^{\beta-1}(x) D d(x)|^{q-p+2}\\
             &=& C_1^{q-p+2}|x|^{(\beta-1)(q-p+2)} + C_2^{q-p+2}d^{(\beta-1)(q-p+2)}(x) |D d(x)|^{q-p+2},
\end{eqnarray*}
and 
\begin{equation*}
|D w_1(x)|^{p-2}\geq C_1^{p-2}|x|^{(\beta-1)(p-2)} + C_2^{p-2}d^{(\beta-1)(p-2)}(x) |D d(x)|^{p-2}.
\end{equation*}
Using that $d(x)=h(1-|x|)$, with $h$ being $C^2$, non-decreasing and concave, $0<\beta<1$, we have
$$D^2 w_1(x) \leq  C_1|x|^{\beta-2} Id + C_2 d^{\beta-1}(x)\left(\frac{h'}{|x|} Id -h''\frac{x}{|x|}\otimes\frac{x}{|x|}\right)+(1-\beta) C_2 d^{\beta-2}(x) D d(x)\otimes D d(x),$$
and $$\lambda_i(D^2 w_1(x))\leq C_1|x|^{\beta-2}  + C_2 d^{\beta-1}(x)\left(\frac{h'}{|x|} -h''\right)+(1-\beta) C_2 d^{\beta-2}(x) |D d(x)|^2.$$
At this point, it is worth noticing that because of the properties of $h$, the term $\left(\frac{h'}{|x|} -h''\right)$ is bounded.

These properties imply that, we can (almost) consider the two terms (in $|x|$ and in $d(x)$) separately. Since $(\beta-1)(q-p+2)= (\beta-2)$, the $\dfrac{C_1}{\beta} |x|^{\beta}$ term yields
$$-(p-1) C_1 |x|^{\beta-2}+ | C_1|x|^{\beta-2}x|^{q-p+2}=|x|^{\beta-2}\left(-(p-1) C_1 + C_1^{q-p+2}\right).$$
By choosing $C_1$  large enough, we can have for any $K_1>0$
$$|x|^{\beta-2}\left(-(p-1) C_1 + C_1^{q-p+2}\right)\geq K_1 |x|^{\beta-2}\quad\text{in}\, B_1(0)\backslash\left\{0\right\}.$$
On the other hand the $\dfrac{C_2}{\beta} \big( d^{\beta}(0)-d^{\beta}(x)\big)$ term yields
\begin{equation}\label{secterm}
-(p-1) C_2 d^{\beta-1}(x)\left(\frac{h'}{|x|} -h''\right)+(\beta-1)(p-1) C_2 d^{\beta-2}(x) |D d(x)|^2
+ C_2^{q-p+2} |d^{\beta-1} D d(x)|^{q-p+2}.
\end{equation}
We have to consider two cases: either $|x|\geq \dfrac{1}{2}$ and then $h'=1$, $h''=0$ and $D d(x)=-\dfrac{x}{|x|}$; hence the above quantity is given by 
$$-(p-1)C_2 d^{\beta-1}(x)\left(\frac{1}{|x|}\right)-(p-1)(1-\beta) C_2 d^{\beta-2}(x)+ C_2^{q-p+2} d^{(\beta-1)(q-p+2)}.$$
Recalling that $(\beta-1)(q-p+2)=(\beta-2)$, then  for $C_2$ large enough  we have  for any $K_2>0$
$$ -(p-1)C_2 d^{\beta-1}(x)\left(\frac{1}{|x|}\right)-(p-1)(1-\beta) C_2 d^{\beta-2}(x)+ C_2^{q-p+2} d^{(\beta-1)(q-p+2)}\geq K_2 d^{\beta-2}(x).$$
Now for $|x|\leq \dfrac{1}{2}$, the quantity \eqref{secterm} coming from the $d(x)$-term is bounded and can be controlled by the $|x|$-term. Hence, for any constant $C>0$, choosing first $C_2$ large enough and then $C_1$ large enough,
 we have in $\overline{B_1(0)}\backslash\left\{0\right\}$ 
\begin{eqnarray*}
-(p-1)|D w_1 (x)|^{p-2}\lambda_i(D^2 w_1(x)) +|D w_1(x)|^q&\geq& |D w_1(x)|^{p-2}\left( K_1 |x|^{\beta-2}
+K_2 d^{\beta-2}(x)\right)\\
&\geq &\left( K_1 |x|^{(\beta-1)(p-1)-1}\right) \geq C.
\end{eqnarray*}
Next we  set
$$w_r(x):=r^{\beta} w_1\left(\dfrac{x}{r}\right).$$
It is easy to check that  for $0<r\leq 1$, $G( D w_r,D^2 w_r)\geq r^{(\beta-1)(p-1)-1}C-C\geq 0$ on $\overline{B_r(0)}\backslash\left\{0\right\}$.
\medskip

\textbf{H3.}  Comparison result.  Let $v$ be any bounded usc  viscosity subsolution
 of $G_r( Dv, D^2v)\leq0$ in $B_r(0)\backslash\left\{0\right\}$ then
\begin{equation}
v(y)\leq v(x)+ r^{\beta}w_1\left(\dfrac{y-x}{r}\right).
\end{equation}
 We use the fact that $v(0)+w_r(x)$ is a  \textbf{strict} super-solution up to the boundary and that it is a \textbf{continuous} function. It follows that the comparison  is a direct consequence of Proposition \ref{ellipscr}.

Since the hypotheses are satisfied, we can apply Proposition 2.1 of  \cite{barle1}  to obtain the $C^{0,\beta}$ regularity of subsolutions, both locally and globally with further assumptions on $\Omega$.

\begin{rem}As far as the exponent $\beta$ is concerned, the value  is the best one can expect in the assumption of the above theorem (see \cite{CLP}).\\
It is well-known that the degeneracy of the $p$-Laplacian is an an obstruction to the solvability of the Dirichlet problem in the classical sense. The presence of the strongly non-linear term with $q>p$ is another source of obstruction, even in the uniformly elliptic case since examples of boundary layers can occur \cite{daba, lasrylions}. By the previous result, we know that every continuous solution to \eqref{strong} is H\"older  continuous up to the boundary. Hence, a necessary condition in order that the solution can attain continuously the boundary data $g$ is the existence of some $C\geq 0$ such that
$$|g(x)-g(y)|\leq C[x-y|^{\beta}\quad \hbox{for all   } x, y \in\partial\Omega, \quad\beta=\dfrac{q-p}{q-p+1}.$$
 For the uniformly elliptic case $p=2$,  a more detailed study including several gradient bounds and applications can be  found in \cite{lasrylions}.

\end{rem}

As an application of the previous regularity result,  we consider the generalized Dirichlet problem consisting in solving \eqref{quasistation}.

\begin{theor}\label{fab}
Let  $\Omega\subset \mathbb{R}^N$ be a bounded domain  with a $C^2$-boundary.  Assume that $q>p\geq 2$,   $\tilde{f}\in C(\overline{\Omega})$, $\tilde{g}\in C(\pO)$ and $\lambda>0$.
Let $u$ and $v$ be respectively a bounded usc  subsolution
  and a bounded lsc super-solution of \eqref{quasistation} with $u$ satisfying for $x\in\pO$
$$u(x)=\underset{\substack{y\to x\\ y\in\Omega}}{\limsup}\quad u(y).$$
Then, $u\leq v$ on $\overline{\Omega}$. Moreover Problem~\eqref{quasistation} has a unique viscosity solution which belongs to $C^{0,\beta}(\overline{\Omega})$.
\end{theor}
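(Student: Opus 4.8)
The plan is to deduce Theorem~\ref{fab} from the tools already assembled: the $C^{0,\beta}$ regularity result (Theorem~\ref{holder}), the elliptic Strong Comparison Result (Proposition~\ref{ellipscr}(ii)), Proposition~\ref{boundcond}(ii) on the absence of loss of boundary condition for subsolutions, and Perron's method. The overall structure mirrors the standard viscosity-solutions scheme: first comparison, then existence via Perron, then regularity of the unique solution.

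\textbf{Step 1: Comparison.} Given a bounded usc subsolution $u$ and a bounded lsc supersolution $v$ of \eqref{quasistation} with $\lambda>0$, I would first apply Proposition~\ref{ellipscr}(ii). Since $\lambda>0$, no strict-supersolution hypothesis is needed, so the proposition directly gives $\tilde u\leq v$ on $\overline\Omega$, where $\tilde u$ is the boundary-relaxed envelope of $u$. But the statement of Theorem~\ref{fab} assumes precisely that $u$ already equals its own relaxed envelope at boundary points, i.e. $u=\tilde u$ on $\overline\Omega$; hence $u\leq v$ on $\overline\Omega$. This is essentially immediate once Proposition~\ref{ellipscr}(ii) is in hand.

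\textbf{Step 2: Existence.} To produce a solution I would run Perron's method. One must exhibit a bounded subsolution $\underline u$ and a bounded supersolution $\overline u$ with $\underline u\leq \overline u$. Because $\lambda>0$, large constants serve as barriers: for $M$ large, the constant $M$ satisfies $\lambda M-\tilde f\geq 0$ on $\overline\Omega$ (recalling $|D(\pm M)|=0$), so $\overline u\equiv M$ is a supersolution, and $\underline u\equiv -M$ a subsolution, for $M$ large enough; one also needs $-M\leq \tilde g\leq M$ on $\pO$ so the boundary condition is respected, which again holds for $M$ large. Then the supremum $u$ of all subsolutions between $\underline u$ and $\overline u$ is, by the usual Ishii argument, a discontinuous viscosity solution: $u^*$ is a subsolution and $u_*$ a supersolution. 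Proposition~\ref{boundcond}(ii) guarantees $u^*\leq \tilde g$ on $\pO$, so there is no loss of boundary condition for the subsolution part. Replacing $u^*$ by its boundary-relaxed envelope (which by Proposition~\ref{ellipscr}(ii), or by the standard fact, remains a subsolution equal to its own envelope), Step 1 applies to the pair $(u^*,u_*)$ and yields $u^*\leq u_*$; since trivially $u_*\leq u^*$, we get $u^*=u_*=u$, a continuous viscosity solution. Uniqueness is then another application of Step 1 to two solutions.

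\textbf{Step 3: Regularity.} Finally, the unique solution $u$ is in particular a bounded usc viscosity subsolution of $-\mathrm{div}(|Du|^{p-2}Du)+|Du|^q = \tilde f-\lambda u \leq C$ in $\Omega$, with $C:=\|\tilde f\|_\infty+\lambda\|u\|_\infty$ a finite constant. Since $\Omega$ is a bounded $C^2$ domain and $q>p\geq 2$, Theorem~\ref{holder} applies and gives $u\in C^{0,\beta}(\overline\Omega)$ with $\beta=\tfrac{q-p}{q-p+1}$. The main obstacle — and the only genuinely nontrivial ingredient — is not in this proof per se but is entirely outsourced: it is the elliptic SCR of Proposition~\ref{ellipscr}(ii), whose proof (deferred to the appendix) must handle the quasilinear $p$-Laplacian diffusion and the possible loss of boundary condition for the supersolution. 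Everything else here is bookkeeping: checking the constant barriers, invoking Perron, and plugging into Theorem~\ref{holder}. One minor point to be careful about is that in Step 2 the Perron solution's subsolution envelope must genuinely coincide with its relaxed boundary envelope before applying the comparison in the form stated; this is handled by the $\tilde u$ construction in Proposition~\ref{ellipscr}(ii) itself.
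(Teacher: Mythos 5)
Your proposal is correct and follows essentially the same route as the paper: comparison via Proposition~\ref{ellipscr}(ii) (the paper invokes Theorem~\ref{holder} to get continuity of $u$ up to the boundary, while you use the hypothesis $u=\tilde u$ on $\pO$ directly — an immaterial difference), existence via constant barriers and Da Lio's up-to-the-boundary Perron method, and regularity by viewing the solution as a subsolution of \eqref{strong} with $C=\|\tilde f\|_{L^\infty}+\lambda\|u\|_{L^\infty}$ and applying Theorem~\ref{holder}.
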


\begin{proof} For the comparison part, Theorem \ref{holder} implies that $u$ is H\"older continuous, hence the  comparison $u\leq v$ is a  direct consequence of Proposition \ref{ellipscr}. Once noticed that $-\left(\lambda^{-1}||\tilde{f}||_{L^{\infty}}+||\tilde{g}||_{L^{\infty}}\right)$  and $+\left(\lambda^{-1}||\tilde{f}||_{L^{\infty}}+||\tilde{g}||_{L^{\infty}}\right)$ are respectively sub and super-solution, we can apply the Perron's method with the version up to the boundary  (see \cite{dalio02}). Since a solution is also a subsolution, the 
H\"older regularity is a direct consequence of Theorem \ref{holder}.

\section{The Ergodic Problem}
\subsection{Existence of the pair $(c, u_{\infty})$}

In this part we study the existence of a pair $(c, u_{\infty})\in \mathbb{R}\times C(\overline{\Omega})$ for which $u_{\infty}$ is a viscosity solution of the state-constraints problem \eqref{erg}-\eqref{constr}, to gather with the uniqueness of the ergodic constant $c$. For this purpose, we introduce a $\lambda u$-term in the equation, as it is classical, with the aim to let $\lambda$ tend toward $0$. This key step is described by the following Lemma.
\begin{lem}
 Let $\tilde{f}\in  C(\overline{\Omega})$ and $\beta=\dfrac{q-p}{q-p+1}$.  For $0<\lambda<1$ and $q>p$, there exists  a unique viscosity solution  $u_{\lambda}\in C^{0, \beta}(\overline{\Omega})$ of the state constraint problem 
\begin{equation}\label{clem1}
 -\text{div}\, (|D u_{\lambda}|^{p-2} D u_{\lambda} )+|D u_{\lambda}|^q+\lambda u_{\lambda}=\tilde{f}(x)\quad\hbox{in   }\Omega,
\end{equation}
\begin{equation}\label{clem2}
-\text{div}\, (|D u_{\lambda}|^{p-2} D u_{\lambda} )+|D u_{\lambda}|^q+\lambda u_{\lambda}\geq \tilde{f}(x) \quad\hbox{on   }\partial\Omega.
\end{equation}
Moreover there exists a constant $\tilde{C}>0$ such that, for all $0<\lambda <1$,
\begin{equation}\label{bornergo}
 |\lambda u_{\lambda}|\leq \tilde{C} \quad\hbox{in   }\overline{\Omega}.
\end{equation}

\end{lem}

\begin{proof}
 For $R>0$, we consider the following generalized Dirichlet problem
\begin{equation}
\left\{
 \begin{array}{ll}
  -\text{div}\, (|D u_{R,\lambda}|^{p-2} D u_{R, \lambda} )+|D u_{R, \lambda}|^q+\lambda u_{R, \lambda}=f(x) &\quad\text{in}\quad \Omega,\\
u_{R,\lambda}=R& \qquad\text{in}\quad \partial\Omega.
 \end{array}
\right.
\end{equation}
By Theorem \ref{fab}, this problem admits a unique viscosity solution $u_{R,\lambda}$.

 Moreover, $u_{R, \lambda}$ satisfies 
\begin{equation}\label{pbapro}
-\lambda^{-1}\left\| f\right\|_{L^{\infty}} \leq u_{R, \lambda}\leq -\dfrac{M_1}{\beta} d^{\beta} (x)+\dfrac{M_2}{\lambda}\quad\text{in}\quad\Omega.
\end{equation}
Indeed, on the one hand, it is easy to see that $-\lambda^{-1}\left\| f\right\|_{L^{\infty}}$ is a subsolution. On the other hand, borrowing arguments from  \cite{tchamba},  we claim that for some $M_1, M_2>0$ chosen large enough, $\bar{u}(x)=-\dfrac{M_1}{\beta} d^{\beta} (x)+\dfrac{M_2}{\lambda}$ is a supersolution of \eqref{clem1}-\eqref{clem2}. 
Indeed, using that $q(\beta-1)=(p-2)(\beta-1)+(\beta-2)$, we have
\begin{align*}
-\text{div}\, (|D \bar{u}|^{p-2} D \bar{u})+|D \bar{u}|^q+\lambda \bar{u}-\tilde{f}(x)&=M_1^{p-1} |D d|^{p-2} d^{(p-2)(\beta-1)}\Big[ (p-1)(\beta-1)  d^{\beta-2} |D d|^2\\
&+ d^{\beta-1} \Delta d +(p-2)   d^{\beta-1}\left\langle  D^2 d\, \hat{D d}, \hat{Dd}\right\rangle \Big]\\
& +M_1^q d^{q(\beta-1)}| D d|^q-\lambda \dfrac{M_1}{\beta}  d^{\beta} +M_2-\tilde{f}\\
&=M_1^{p-1} |D d|^{p-2} d^{q(\beta-1)}\Big[(p-1) (\beta-1) |D d|^2+d\Delta d\\
&+ (p-2)d \left\langle  D^2 d \,\hat{D d}, \hat{Dd}\right\rangle  +M_1^{q-p+1}| D d|^{q-p+2}\Big]\\
&  -\lambda \dfrac{M_1}{\beta}  d^{\beta} +M_2-\tilde{f}.
\end{align*}
In $\Omega^{\delta}$ where $|D d|=1$ and $0\leq d\leq \delta$, we have
\begin{align*}
 -\text{div}\, (|D \bar{u}|^{p-2} D \bar{u})+|D \bar{u}|^q+\lambda \bar{u}-\tilde{f}(x)&=M_1^{p-1} d^{q(\beta-1)}\Big[(p-1) (\beta-1) +d\Delta d\\
&+ (p-2)d \left\langle  D^2 \ d D\, d, Dd\right\rangle  +M_1^{q-p+1} \\
&-\lambda \dfrac{M_1^{2-p}}{\beta}  d^{\beta(2-p)+p}\Big]
 +M_2-\tilde{f}.
\end{align*}
Taking $M_1>1$ and $M_2>0$ such that 
\begin{eqnarray}\label{condi}
&&M_1^{q-p+1} \geq (p-1)(1-\beta)+ (p-2+\sqrt{N}) \delta\left\| D^2 d\right\|_{L^{\infty}}+\dfrac{\delta^{\beta(2-p)+p}}{\beta} \\
&&\text{and}\quad\quad M_2\geq 2 \left\|\tilde{f}\right\|_{L^{\infty}},
\end{eqnarray}
then  we have $-\text{div}\, (|D \bar{u}|^{p-2} D \bar{u})+|D \bar{u}|^q+\lambda \bar{u}-\tilde{f}(x)\geq 0$ in $ \Omega^{\delta}$.

Now in $\Omega_{\delta}$, we have $|D d |\leq 1$ and $\delta\leq d(x)\leq C(\Omega)$. Using that $0<\lambda<1$, then we have
\begin{eqnarray*}
-\text{div}\, (|D \bar{u}|^{p-2} D \bar{u})+|D \bar{u}|^q+\lambda \bar{u}-\tilde{f}(x) &\geq& M_1^{p-1}\Big[ (p-1)(\beta-1)\left\|d^{(\beta-1)(p-1)-1}\right\|_{L^{\infty}} \\
&&-(p-2+\sqrt{N}) \left\|d^{(\beta-1)(p-1)} \right\|_{L^{\infty}} \left\| D^2 d\right\|_{L^{\infty}}\Big]\\
 &&-\dfrac{M_1}{\beta} \left\| d^{\beta}\right\|_{L^{\infty}} +M_2-\left\|\tilde{f}\right\|_{L^{\infty}}.
\end{eqnarray*}
Hence if we take $M_1$ as in \eqref{condi} and $M_2$ such that
\begin{eqnarray}\label{hihihi}
 M_2&\geq& M_1^{p-1}\Big[ (p-1)(1-\beta)\left\|d^{(\beta-1)(p-1)-1}\right\|_{L^{\infty}}+(p-2+\sqrt{N}) \left\|d^{(\beta-1)(p-1)} \right\|_{L^{\infty}} \left\| D^2 d\right\|_{L^{\infty}}\Big]\nonumber\\
&&+\dfrac{M_1}{\beta} \left\| d^{\beta}\right\|_{L^{\infty}} +3 \left\|\tilde{f}\right\|_{L^{\infty}},
\end{eqnarray}
then the function $\bar{u}$ satisfies the supersolution inequality in $\Omega_{\delta}$.
The estimate  follows by applying the SCR to $-\lambda^{-1}\left\| f\right\|_{L^{\infty}}$, $u_{R,\lambda}$ and $\bar{u}$.

It is worth pointing out that, if $M_2$ is as in \eqref{hihihi}, then 
$$u_{R, \lambda}< R \quad\text{on  }\overline{\Omega}\quad\text{for any   }R>\dfrac{M_2}{\lambda}.$$
 It follows that $u_{R,\lambda}$ is a viscosity solution of \eqref{clem1}-\eqref{clem2} for all $R>\dfrac{M_2}{\lambda}$. Theorem \ref{fab} implies that $u_{\lambda}=u_{R,\lambda}$ for $R>\dfrac{M_2}{\lambda}$.

We have
$$- \max\left(\left\|\tilde{f}\right\|_{L^{\infty}}, M_2\right)\leq \lambda u_{\lambda}\leq \max\left(\left\|\tilde{f}\right\|_{L^{\infty}}, M_2\right).$$
\end{proof}

Now we are in position to prove Theorem~\ref{ergodic}. Using that $u_{\lambda}\geq -\lambda^{-1}\left\| f\right\|_{L^{\infty}}$ in $\overline{\Omega}$,  we have
$$-\text{div} (|D u_{\lambda}|^{p-2} D u_{\lambda})+|D u_{\lambda}|^q-\tilde{f}\leq \left\|\tilde{f}\right\|_{L^{\infty}}\quad\text{in   }\Omega.$$
Theorem \ref{holder} implies uniform   H\"older estimates with respect to  $\lambda$ for the functions $u_{\lambda}$. Consequently if $x_0$ is an arbitrary point in $\overline{\Omega}$, we get that $w_{\lambda}:=u_{\lambda}(x)-u_{\lambda}(x_0)$ is also uniformly bounded in $C^{0, \beta}(\overline{\Omega})$ (recall that $\Omega$ is connected).

From \eqref{bornergo}, we also know that $\left\{-\lambda u_{\lambda}(x_0)\right\}_{\lambda}$ is bounded. 
It follows that, by Ascoli's Theorem, we can extract a uniformly converging subsequence  from $\left\{w_{\lambda}\right\}_{\lambda}$ and we can assume that $\left\{-\lambda u_{\lambda}(x_0)\right\}_{\lambda}$ converges along the same subsequence.  Denoting by $u_{\infty}$ and $c$, the limits of $\left\{w_{\lambda}\right\}_{\lambda}$ and $\left\{-\lambda u_{\lambda}(x_0)\right\}_{\lambda}$ respectively and taking into account that $w_{\lambda}$ solves
$$-\text{div} (|D w_{\lambda}|^{p-2} D w_{\lambda})+|D w_{\lambda}|^q-\tilde{f}(x)+\lambda w_{\lambda}=-\lambda u_{\lambda}(x_0)\quad
\text{in   }\Omega,$$
we can pass into the limit $\lambda\to 0$ and conclude by the stability result for viscosity solutions that, $(c, u_{\infty})$ solves the ergodic problem.

Now let $(c_1, u^1_{\infty})$ and $(c_2, u^2_{\infty})$ be two solutions of the ergodic problem. If $c_1<c_2$ or $c_1>c_2$, we could use Proposition~\ref{ellipscr} to obtain either $u^1_{\infty} \leq u^2_{\infty}$ or $u^2_{\infty} \leq u^1_{\infty}$. But such comparison cannot hold since, for all $k\in \mathbb{R}$, $u^i_\infty +k$ are solutions as well of the ergodic problem, proving the uniqueness of $c$.

\section{ Proof of Theorem \ref{globalex} and Study of the Large Time Behavior}
\subsection{Proof of Theorem \ref{globalex}}
Once one noticed that $u_1(x,t)=t\left\|f\right\|_{L^{\infty}}+\left\|g\right\|_{L^{\infty}}+ \left\|u_0\right\|_{L^{\infty}}$ and $u_2(x,t)=-t\left\|f\right\|_{L^{\infty}}-\left\|g\right\|_{L^{\infty}}- \left\|u_0\right\|_{L^{\infty}}$ are respectively  super-solution  and subsolution
 of \eqref{pb}--\eqref{boundary}, the existence  and uniqueness of a continuous global solution  can be obtained by  Perron's method, combining classical arguments of \cite{userguide} (see also \cite{ishii}), the version up to the boundary of Da Lio \cite{dalio02} and the Strong Comparison Result  of the  Proposition \ref{parascr} on any time interval $[0, T]$. 

\subsection{Large Time Behavior}
Let $u_{\infty}$ be  a bounded solution of \eqref{erg}--\eqref{constr}.
If $c\leq 0$, then $u$ is uniformly bounded. Indeed, if $C> \left\|u_{\infty}\right\|_{L^{\infty}}+\left\|u_0\right\|_{L^{\infty}}+\left\|g\right\|_{L^{\infty}}$, then $u_{\infty}-C$  is a subsolution
 of \eqref{pb}--\eqref{boundary}.   On the other hand, if $\bar{x}$ is a point far enough from $\Omega$, then $|x-\bar{x}|^2 $ is  a super-solution. To see this,it suffices to  take  $\bar{x}$ such that $B(\bar{x}, R)\cap\overline{\Omega}=\emptyset$ with $R> \max (1,\left(\left\|f\right\|_{L^{\infty}} +(p-1)\right)^{\frac{1}{q-p+2}})$.

Hence applying the Strong Comparison Result, we have
$$u_{\infty}(x)-C\leq u(x,t)\leq |x-\bar{x}|^2+ C \quad
\text{on}\quad\overline{\Omega}\times (0, +\infty),$$
and therefore
$$\lim_{t\to\infty} \dfrac{u(x,t)}{t}=0.$$
If $c>0$, then $u_{\infty}-ct +C$ is a supersolution of \eqref{pb}--\eqref{boundary} with state constraint condition on $\partial \Omega$.  On the other hand, $u_{\infty}-ct-C$ is a subsolution
 of \eqref{pb}--\eqref{boundary} which is below $u_0$ at $t=0$ and below $g$ on $\partial\Omega$. Applying the Strong Comparison Result, we have
$$-ct+u_{\infty}-C\leq u(x,t)\leq u_{\infty}-c t +C\quad
\text{on}\quad \overline{\Omega}\times (0, +\infty).$$
The result follows by dividing by $t$ and then letting $t\to + \infty$.

\section*{Appendice: A  General Strong Comparison Result}
\subsection*{A: Properties of the Regularization by Sup-convolution
of Viscosity Subsolutions}

To circumvent the lack of smoothness of the viscosity subsolution
 $u$, we consider instead the more regular  time sup-convolution $u^{\alpha}$. Such regularization was first introduced by Lasry and Lions \cite{lasry1986}, and for $0<\alpha\leq 1$ and $u$ a bounded usc, viscosity subsolution
 is defined by
\begin{equation}\label{defsup}
u^{\alpha}(x,t)=\underset{s\geq 0}{\text{sup}}\left\{u(x,s)-\dfrac{|t-s|^2}{\alpha^2}\right\}.
\end{equation}

 We have the following useful properties on $u^{\alpha}$.
\begin{propo}If $u$ is a bounded usc viscosity subsolution
 $u$  of \eqref{pb}--\eqref{boundary}, the following properties are true
 \begin{enumerate}[i)]
 \item Set $K=\sqrt{2 \left\|u\right\|_{L^{\infty}}}$. Then up to $o_{\alpha}(1)$,
  $u^{\alpha}$ is an usc viscosity subsolution
 of \eqref{pb}--\eqref{boundary} on  $\Omega\times (K\alpha, T- K\alpha)$. Moreover  $u^{\alpha}$ is locally Lipschitz  w.r.t to the time variable and
$$||u_t^{\alpha}||_{L^{\infty}}\leq  \dfrac{2 K}{\alpha} \; .$$
\item  We have $u^{\alpha}(x, K\alpha)\leq u_0(x)+o_{\alpha}(1)$ in $\overline{\Omega}$ and $u^{\alpha}(x,t)\leq g(x,t)+o_{\alpha}(1)$ on $\pO\times (K\alpha, T-K\alpha)$.
\item $u^{\alpha}$ is H\"older continuous w.r.t  the space variable $x$ on $\overline{\Omega}$ uniformly w.r.t the time  for $t> K\alpha$.
 \end{enumerate}
\end{propo}
\begin{proof}
Since $u(x)$ is bounded, the supremum  in \eqref{defsup} is attained  at some point $s^*(t)$ which belong to the interval $(t-K\alpha, t+K \alpha)$. 
Let $\varphi\in C^2\left(\overline{\Omega}\times [K\alpha, T-K\alpha]\right)$ and assume that  $u^{\alpha}-\varphi$ has  a local maximum at $(x_0, t_0)\in \Omega\times(K\alpha, T-K\alpha)$. Denote by $s^*(t_0)$ a point such that $u^{\alpha}(x_0, t_0)=u(x_0, s^*(t_0))- \dfrac{|t_0-s^*(t_0)|^2}{\alpha^2}$, then the function
$$\tau\mapsto u(x, \tau)-\varphi(x, \tau-s^*(t_0)+t_0)$$
reaches a local maximum at $(x_0, s^*(t_0))$. 
Recalling that $u$ is a viscosity subsolution
 of \eqref{pb}, we get by definition
$$\varphi_t(x_0,t_0)-\text{div}\left(|D \varphi|^{p-2} D \varphi(x_0,t_0)\right) +| D \varphi (x_0, t_0)|^q\leq f(x_0, s^*(t_0))\leq f(x_0, t_0)+ o_{\alpha}(1),$$
by using the uniform continuity of $f$ on $\overline \Omega \times [0,T]$. 

Next, let $h>0$ small enough, then
\begin{eqnarray*}
u^{\alpha}(x, t\pm h)-u^{\alpha} (x,t)&\geq& u(x, s^*(t))- \dfrac{|t\pm h-s^*(t)|^2}{\alpha^2} - u(x, s^*(t)) + \dfrac{|t-s^*(t)|^2}{\alpha^2}\\
&=& - \left(\dfrac{h^2\pm 2 h(t-s^*(t))}{\alpha^2}\right)\geq - \left(\dfrac{h^2+2  K h\alpha}{\alpha^2}\right).
\end{eqnarray*}
A first estimate of $u_t^{\alpha}$ (from below) follows by dividing the previous inequality by $h$ and sending $h\to 0$. Exchanging the role of $t+h$ and $t$ provides the estimate from above.

The second assertion  comes from the upper semi-continuity of $u$ and the fact that $u(x,0)\leq u_0(x)$. Indeed $$u^{\alpha}(x, K\alpha)=u(x, s^{*}(K\alpha))-\dfrac{|K\alpha-s^{*}(K\alpha)|^2}{\alpha^2}\leq u(x, s^{*}(K\alpha))$$
 with $s^{*}(K\alpha)\to 0$ as $\alpha\to 0$. Taking the $\lim\,\sup$  we get 
$$\underset{\alpha\to 0}{\text{lim sup}}\, u^{\alpha}(x, K\alpha)\leq \underset{\alpha \to 0}{\text{lim sup}}\, u(x, s^{*}(K\alpha))\leq u(x,0)\leq u_0(x).$$
Similarly, we use the semi-continuity of $u$ and Proposition \ref{boundcond} to prove that
$$u^{\alpha}(x,t)\leq g(x,t)+o_{\alpha}(1).$$

The last assertion is a consequence of Theorem \ref{holder}  where  $C=\left\|f\right\|_{L^{\infty}}+\left\|u^{\alpha}_t\right\|_{L^{\infty}}$.
\end{proof}

 Let us note that, using the lower semi-continuity of $v$ and $v(x,0)\geq u_0(x)$, we have $v(x, K\alpha)\geq u_0(x)-o_{\alpha}(1)$. Hence \begin{equation}\label{prem}
       u^{\alpha}(x,K\alpha)\leq v(x, K\alpha)+ \omega(\alpha) \quad\text{for all}\quad x \in\overline{\Omega},
      \end{equation}
for some $\omega(\alpha)$ satisfying $\underset{\alpha\to 0}{\lim}\, \omega(\alpha)=0$.

\end{proof}
\subsection*{B: Proof of Proposition \ref{parascr}}

In order to prove the SCR, we are going to show that $\tilde{u}^{\alpha}-v\leq \omega(\alpha)$ in $\overline{\Omega}\times [K \alpha, T-K\alpha]$. Inequality \eqref{compar}  follows by passing to the limit as  $\alpha\to 0$. To do so, the continuity of $u^{\alpha}$ is a key point since it allows to use the  arguments of \cite{rouy, barle1}.

For the sake of simplicity of notations, we drop  the $\widetilde{}$ on $\tilde{u}^{\alpha}$. The key idea is to compare  $u^{\alpha}_{\mu}:=\mu u^{\alpha}$ and $v$ with $0<\mu<1$ close to 1 in order to take care of the difficulty due to the $|D u|^q$ term. 

We argue by contradiction assuming that $M^{\alpha}=\underset{\overline{\Omega}\times[K\alpha, T-K\alpha]}{\mathrm{max}} (u^{\alpha}-v-\omega(\alpha))>0$. If $\mu$ is sufficiently close to 1 and if $\eta_{\alpha}>0$ is a constant small enough, then we have  $M_{\mu, \eta}^{\alpha}=\underset{\overline{\Omega}\times[K\alpha, T-K\alpha]}{\mathrm{max}} \left( u^{\alpha}_{\mu}-v -\omega(\alpha)-\eta_{\alpha} (t-K\alpha)\right)>M^{\alpha}/2$.

 We denote by $(x_0, t_0)$ a point of $\overline{\Omega}\times [K\alpha, T-K\alpha]$ such that $M_{\mu, \eta}^{\alpha}= u_{\mu}^{\alpha}(x_0,t_0)-v(x_0,t_0) -\omega(\alpha)-\eta_{\alpha} (t_0-K\alpha)$.  The existence of $(x_0,t_0)$ is guaranteed by the upper and lower semi-continuity of $u^{\alpha}$ and $v$ respectively (we drop the dependence of $(x_0, t_0)$ on  $\eta_{\alpha}$, $\alpha$ and $\mu$ for the sake of simplicity of notations).  Since $M_{\eta, \mu}^{\alpha}>0$, we necessarily have $t_0 > K\alpha$ in view of  \eqref{prem}. By the Maximum Principle of the ''Users guide'' \cite{userguide}, we have $(x_0, t_0) \in\partial\Omega\times (K\alpha, T-K\alpha)$.\\

Next, using the regularity of the boundary, we can find a $C^2$-function $\xi:\mathbb{R}^N\rightarrow\mathbb{R}^N$  which is equal to $n=-Dd$ in a neighborhood of $\partial \Omega$.
Now we consider the auxiliary function $\Phi_{\eps}:\overline{\Omega}\times\overline{\Omega}\times [K\alpha, T-K\alpha]\times[K\alpha, T-K\alpha]\rightarrow\mathbb{R}$  defined by
\begin{eqnarray*}
\Phi_{\eps}(z,w,t,s)&=u_{\mu }^{\alpha}(z,t)-v(w,s)-\omega(\alpha)-\eta_{\alpha}(t-K\alpha)-\left|\dfrac{z-w}{\varepsilon}-\chi\left(\dfrac{z+w}{2}\right)\right|^4\\
&+\dfrac{|t-s|^2}{\varepsilon^2}
\end{eqnarray*}

Let $(\bar{z}, \overline{w}, \bar{t}, \bar{s})$ be  a global maximum point of $\Phi_{\eps}$ on $\overline{\Omega}\times\overline{\Omega}\times [K\alpha, T-K\alpha]\times[K\alpha, T-K\alpha]$. For notational simplicity  we drop again the dependance  of $(\bar{z}, \overline{w}, \bar{t}, \bar{s})$  on $\eps, \mu$ and $\eta$. Using the inequality $\Phi_{\eps}(\bar{z}, \overline{w}, \bar{t}, \bar{s})\geq \Phi_{\eps}(x_0, x_0, t_0, t_0)$ and the  boundedness of $u_{\mu }^{\alpha}, v$ and  $\chi$, we have
$$\left|\dfrac{\bar{z}-\overline{w}}{\varepsilon}\right|\leq C, \quad
\left| \dfrac{\bar{t}-\bar{s}}{\varepsilon}\right|\leq C,$$ 
for some  constant $C>0$ depending on $\left\|u\right\|_{L^{\infty}}, \left\|v\right\|_{L^{\infty}}$ and $\alpha$. By the compactness of $\overline{\Omega}\times [K\alpha, T-K\alpha]$, we can assume that $(\bar{z},\bar{t})$, $(\overline{w},  \bar{s})$ converge to $(\tilde{x}, \tilde{t})\in \overline{\Omega}\times [K\alpha, T-K\alpha]$. Moreover, using the continuity of $u^{\alpha}$, we have
\begin{eqnarray*}
 \Phi_{\eps}(\bar{z}, \overline{w}, \bar{t}, \bar{s})
\geq \Phi_{\eps}(x_0-\eps\xi(x_0), x_0, t_0, t_0)=M_{\mu, \eta}^{\alpha}- o_{\eps}(1),\quad\text{as}\, \eps\to 0,
\end{eqnarray*}
and hence  
\begin{equation}\label{pet}
\underset{\eps\to 0}{\text{lim inf}}\, \Phi_{\eps}(\bar{z}, \overline{w}, \bar{t}, \bar{s})\geq M_{\mu, \eta}^{\alpha}.
\end{equation}

On the other hand, we have also
\begin{eqnarray}
\underset{\eps\to 0}{\text{lim sup}}\, \Phi_{\eps}(\bar{z}, \overline{w}, \bar{t}, \bar{s})& \leq& \underset{\eps\to 0}{\text{lim sup}}\,( u_{\mu }^{\alpha}(\bar{z}, \bar{t})-v(\overline{w}, \bar{s})-\eta_{\alpha} (\bar{t}-K\alpha)-\omega(\alpha))\nonumber\\
& \quad &-\underset{\eps\to 0}{\text{lim inf}}\,\left|\dfrac{\bar{z}-\overline{w}}{\varepsilon}-\chi\left(\dfrac{\bar{z}+\overline{w}}{2}\right)\right|^4\nonumber\\
& \quad &-\underset{\eps\to 0}{\text{lim inf}}\,\dfrac{|\bar{t}-\bar{s}|^2}{\varepsilon^2}\nonumber\\
&\leq & M_{\mu,\eta}^{\alpha}\label{set}.
\end{eqnarray}

Therefore,  combining \eqref{pet} and \eqref{set} with classic arguments, we have
\begin{equation}
\left|\dfrac{\bar{z}-\overline{w}}{\varepsilon}-\chi\left(\dfrac{\bar{z}+\overline{w}}{2}\right)\right|^4=o_{\eps}(1),\qquad \dfrac{|\bar{t}-\bar{s}|^2}{\varepsilon^2}=o_{\eps}(1),\label{vet}
\end{equation}
\begin{eqnarray}
 u_{\mu }^{\alpha}(\bar{z} \bar{t})-v(\overline{w}, \bar{s})-\eta_{\alpha} (\bar{t}-K\alpha)-\omega(\alpha)&\to& 
 u_{\mu }^{\alpha}(\tilde{x}, \tilde{t})-v(\tilde{x}, \tilde{t})-\eta_{\alpha} (\tilde{t}-K\alpha)-\omega(\alpha)\nonumber\\
&=&M_{\mu, \eta}^{\alpha} \quad \text{as}\, \eps\to  0.
\end{eqnarray}

It follows that $u_{\mu }^{\alpha}(\bar{z}, \bar{t})\to u_{\mu }^{\alpha}(\tilde{x}, \tilde{t})$ and $v(\overline{w}, \bar{s})\to v(\tilde{x}, \tilde{t})$.

Now, recalling the properties of $u^{\alpha}$ and $v$ at $t=K\alpha$, we have $\bar{t}, \bar{s}>K\alpha$ for $\eps$ small enough. Next we claim that, for $\eps$ small enough the viscosity inequalities hold for $u^{\alpha}$ and $v$.  This is obviously the case for $v$ if $\overline{w}\in\Omega$. If on the contrary $\overline{w}\in \partial\Omega$, then we necessarily  have $v(\overline{w}, \bar{s})<g(\overline{w}, \bar{s})$.
Indeed if $\overline{w}\in \partial\Omega$ then $\tilde{x}\in \pO$. Since there is no loss of boundary conditions for subsolution
s as clearly specified in Proposition \ref{boundcond}, we have
$$\mu u^{\alpha}(\tilde{x}, \tilde{t}) \leq \mu (g(\tilde{x}, \tilde{t})+\omega(\alpha)).$$
Using that $M_{\mu, \eta}^{\alpha}>0$, we cannot have $v(\tilde{x}, \tilde{t})\geq g(\tilde{x},\tilde{t})$
since we 
would then have
$$\dfrac{M^{\alpha}}{2}\leq M_{\mu, \eta}^{\alpha}=\mu u^{\alpha}(\tilde{x}, \tilde{t})-v(\tilde{x}, \tilde{t})-\eta_{\alpha} (\tilde{t}-K\alpha)-\omega(\alpha) \leq( \mu-1)( g(\tilde{x}, \tilde{t})+\omega(\alpha)),$$
 a contradiction by sending $\mu \to 1$.\\

It follows that, if $\tilde{x}\in\partial\Omega$, then we have necessarily that
\begin{equation}
v(\tilde{x}, \tilde{t})< g(\tilde{x},\tilde{t})\quad\text{and}\quad\mu u^{\alpha}(\tilde{x}, \tilde{t}) \leq \mu (g(\tilde{x}, \tilde{t})+\omega({\alpha})).
\end{equation}
 Hence, using that $v(\overline{w}, \bar{s})\to v(\tilde{x}, \tilde{t})<g(\tilde{x}, \tilde{t})$, we deduce that if $\overline{w}\in\partial\Omega$, then $v(\overline{w}, \bar{s})< g(\overline{w}, \bar{s})$ for $\eps$ small enough and the viscosity inequality holds also in this case.

On the other hand, from \eqref{vet} we get that
\begin{equation}
\bar{z}=\overline{w}+\varepsilon\chi\left(\dfrac{\bar{z}+\overline{w}}{2}\right)+ o_{\eps}(1),
\end{equation}
 which implies by the smoothness of the domain and the properties of $\chi$ that $\bar{z}$ lies in  $\Omega$ for $\eps$ small enough and hence the viscosity inequality for $u_{\mu}^{\alpha}$ holds too.\\

Next, we notice
that $ u^{\alpha}_{\mu}$  satisfies
$$\dfrac{1}{\mu}  (u^{\alpha}_{\mu})_t-\dfrac{1}{\mu^{p-1}} \text{div}\,(|D u^{\alpha}_{\mu}|^{p-2} D u^{\alpha}_{\mu})+\dfrac{1}{\mu^q} |D u_{\mu}^{\alpha}|^q\leq f+o_{\alpha}(1) \quad\text{in   } \Omega\times (K\alpha, T-K\alpha),$$
and we can also re-write it as 
\begin{eqnarray*}
&\mu^{p-2}(u^{\alpha}_{\mu})_t-|D u^{\alpha}_{\mu}|^{p-2} \left\{\Delta u^{\alpha}_{\mu}+(p-2) (D^2 u^{\alpha}_{\mu} \widehat{D u^{\alpha}_{\mu}}, \widehat{ Du^{\alpha}_{\mu}})-\mu^{p-1-q} |D u_{\mu}^{\alpha}|^{q-p+2}\right\}\\
&\qquad\qquad\leq \mu^{p-1} (f+o_{\alpha}(1)),
\end{eqnarray*}
where $\widehat{\xi}=\dfrac{\xi}{|\xi|}$ for $\xi\neq 0$ and $\widehat{\xi}=0$ if $\xi\equiv 0$.\\

 The Jensen-Ishii's Lemma \cite{userguide} ensures the existence of $X$, $Y\in \mathcal{S}^N$, $a$, $b\in \mathbb{R}$, $q_1$, $q_2\in\mathbb{R}^N$ such that 
\begin{equation}
(a,q_1,X)\in \overline{\mathcal{P}}^{2,1,+} u_{\mu}^{\alpha}(\bar{z}, \bar{t}), \qquad\qquad 
(b,q_2,Y)\in \overline{\mathcal{P}}^{2,1,-} v(\overline{w}, \bar{s}),
\end{equation}
\begin{eqnarray}\label{grad}
&a-b\geq\eta_{\alpha}>0, \qquad\qquad |q_1-q_2|\leq C\varepsilon (|q_1|\wedge |q_2|),\\
&- \dfrac{o(1)}{\varepsilon^2} I_{2N}\leq 
\begin{pmatrix}
   X & 0 \\
   0 & -Y 
\end{pmatrix}
\leq \dfrac{o(1)}{\varepsilon^2} \begin{pmatrix}
   I_N&-I_N \\
   -I_N & I_N 
\end{pmatrix}+o(1) I_{2N}.\label{matrice}
\end{eqnarray}
where $|q_1|\wedge |q_2|$ denotes the minimum of $|q_1|$ and $|q_2|$. Indeed, for (\ref{grad}), we remark that 
$$q_1 = \left (I_N - \frac{\eps }{2} D\chi\left(\dfrac{\bar{z}+\overline{w}}{2}\right)\right)q \quad\hbox{and}\quad q_2 = \left (I_N + \frac{\eps }{2} D\chi\left(\dfrac{\bar{z}+\overline{w}}{2}\right)\right)q\,,$$
with 
$$q = \frac{4}{\eps} \left|\dfrac{z-w}{\varepsilon}-\chi\left(\dfrac{z+w}{2}\right)\right|^2\left(\dfrac{z-w}{\varepsilon}-\chi\left(\dfrac{z+w}{2}\right)\right)\; ,$$
and (\ref{grad}) is an easy consequence of the boundedness of $D\chi$.

Moreover the viscosity inequalities for $u_{\mu}^{\alpha}$ and $v$ read
\begin{eqnarray}\label{visc1}
&\mu^{p-2}a-|q_1|^{p-2} \left\{ \text{tr}\big(\left[Id+(p-2) \left(\hat{q_1}\otimes \hat{q_1}\right)\right] X\big)-\mu^{p-1-q} |q_1|^{q-p+2}\right\}\\
&\qquad\qquad\leq \mu^{p-1}
(f(\bar{z},\bar{t})+o_{\alpha}(1))\nonumber,\\
&b-|q_2|^{p-2} \left\{\text{tr}\big(\left[Id+(p-2) (\hat{q_2}\otimes  \hat{ q_2})\right]Y\big)-|q_2|^{q-p+2}\right\}\geq f(\bar{w},\bar{s}).\label{visc2}
\end{eqnarray}

In the sequel we fix  $\eta_{\alpha}> 2 o_{\alpha}(1)$ (recall that the $o_{\alpha}(1)$ comes from the sup-convolution procedure and is fixed, therefore we can choose in such a way $\eta_{\alpha}$).
Since  we may have a singularity at  $q_1=0$ or $q_2=0$, we have to consider separately three cases. First we assume that there exists a constant $\gamma>0$ such  that $$|q_1|, |q_2|\geq \gamma.$$ In this case the matrix $A(\xi)=Id+(p-2) (\hat{\xi}\otimes  \hat{ \xi})$ is positive definite, so that its matrix square root $\sigma$ exists and satisfies
$$|\sigma(\xi_1)-\sigma (\xi_2)|\leq c\dfrac{|\xi_1-\xi_2|}{|\xi_1|\wedge|\xi_2|}.$$
Combining \eqref{grad} with the fact  that \eqref{matrice} implies  that $X\leq Y+o_{\eps}(1)$, we have
\begin{equation}
\text{tr}\left(A(q_1)X\right)-\text{tr}\left(A(q_2)Y\right)
\leq \dfrac{o_{\eps}(1)}{\varepsilon^2} |\sigma(q_1)-\sigma(q_2)|^2+o_{\eps}(1)\leq o_{\eps}(1),
\end{equation}
\begin{eqnarray}
|q_2|^{q-p+2}-\mu^{p-1-q} |q_1|^{q-p+2}&=&|q_2|^{q-p+2}-|q_1|^{q-p+2}+(1-\mu^{p-1-q}) |q_1|^{q-p+2}\nonumber\\
&\leq& (q-p+2)|q_1|^{q-p+1}|q_2-q_1|+(1-\mu^{p-1-q}) |q_1|^{q-p+2}\nonumber\\
&\leq&  o(\varepsilon)|q_1|^{q-p+2}+(1-\mu^{p-1-q}) |q_1|^{q-p+2}.
\end{eqnarray}
Multiplying \eqref{visc1} by $\dfrac{|q_2|^{p-2}}{|q_1|^{p-2}}$ which is of order $1+O(\varepsilon)$ and subtracting  from it  \eqref{visc2}, we have

\begin{eqnarray}
(1+O(\varepsilon)) \mu^{p-2}a- b &\leq&|q_2|^{p-2} \left\{ o_{\eps}(1)+o(\varepsilon)|q_1|^{q-p+2}+(1-\mu^{p-1-q}) |q_1|^{q-p+2}\right\}\nonumber\\
&+& \mu^{p-1}
(1+O(\varepsilon))(f(\bar{z},\bar{t})+o_{\alpha}(1))- f(\overline{w},\bar{s}).
\end{eqnarray}
At this point, we recall that the Lipschitz continuity of $u^\alpha$ implies that $|a|\leq \dfrac{2 \mu K}{\alpha}$. On the other hand we remark that, since $1-\mu^{p-1-q} <0$, for fixed $\mu$ the term $o(\varepsilon)|q_1|^{q-p+2}$ is controlled by the $(1-\mu^{p-1-q}) |q_1|^{q-p+2}$ term. 

Now we are going to let $\eps\to 0$ : if we assume that $q_1,q_2$ (which depend on $\eps$) are bounded, we may assume that they converge (we still denote their limits as $q_1,q_2$ respectively). For $\mu$ close enough to 1, we get as $\eps\to 0$
$$0<\eta_{\alpha}/2\leq \mu^{p-2} a-b\leq (\mu^{p-1} -1) f(\tilde{x}, \tilde{t})+ \mu^{p-1}o_{\alpha}(1)+|q_2|^{p-2}(1-\mu^{p-1-q}) |q_1|^{q-p+2}$$
Recalling that  $\eta_{\alpha}>2 o_{\alpha}(1)$, we get a contradiction when $\mu\to 1$ since the last term of the right-hand side is negative. Of course, we get the same contradiction if (at least for some subsequence) $q_1$ or $q_2 \to \infty$.\\

If $q_1, q_2\neq 0$ but $q_1\to0$, $q _2\to 0$  then, noticing that $\dfrac{|q_2|^{p-2}}{|q_1|^{p-2}}$ is still of order $1+O(\varepsilon)$, we can pass to the limit $\eps\to 0$ in the same way and obtain
$$0<\eta_{\alpha}/2\leq \mu^{p-2} a-b\leq (\mu^{p-1} -1) f(\tilde{x}, \tilde{t})+ \mu^{p-1}o_{\alpha}(1),$$
and we also get a contradiction.\\

If $q_1=0$ or $q_2=0$, then necessarily $q_1=q_2=0$  and, by subtracting \eqref{visc2} from \eqref{visc1}, we have
$$\eta_{\alpha}/2\leq \mu^{p-2}a-b\leq \mu^{p-1}
(f((\bar{z},\bar{t}))+o_{\alpha}(1))- f(\overline{w},\bar{s}).$$
We get a contradiction when $\eps\to 0$.

In all cases fixing $\eta_{\alpha}> 2 o_{\alpha}(1)$ we get  a contradiction for  $\eps$ small enough and $\mu$ close to 1 and the conclusion follows.

\subsection*{ C: Proof of Proposition \ref{ellipscr}}
The proof of $(i)$ and $(ii)$ are very similar. Indeed we know by Theorem \ref{holder} that subsolutions of \eqref{quasistation} are  H\"older continuous, so we  are always in a case where the subsolution
 or the supersolution are continous.
We will only give  details of  the proof  for the $p-Laplacian$ operator  in the case where $\lambda>0$ and $v\in C(\overline{\Omega})$.   The other cases are an easy adaptation (the equation \eqref{operat} is even easier to study).  Since $v$ is assumed to be continuous,  we follow the proof of \cite{rouy} with the same trick as before in order to take care of the strong growth of the gradient term.
We argue by contradiction assuming that $M=\underset{\overline{\Omega}}{\mathrm{max}}\, (u-v)>0$ 
If $\mu$ is sufficiently close to 1, then we have  $M_{\mu}=\underset{\overline{\Omega}}{\mathrm{max}}\, ( u_{\mu}-v)>M/2>0$.
Since $u$ is usc and $v$ is continuous this maximum is achieved at $x_0$. We may assume that $x_0\in\partial\Omega$. We drop the dependence of $x_0$ on $\mu$.

Next, using the regularity of the boundary, we can find a $C^2$-function $\xi:\mathbb{R}^N\rightarrow\mathbb{R}^N$  which is equal to $n$ in a neighborhood of $\partial \Omega$.
Now we consider the test function $\Phi_{\eps}:\overline{\Omega}\times\overline{\Omega}\rightarrow\mathbb{R}$ defined by
\begin{equation*}
 \Phi_{\eps}(z,w)=\mu u(z)-u(w)+\left|\dfrac{z-w}{\eps}+\xi\left(\dfrac{z+w}{2}\right)\right|^4.
\end{equation*}

Let $(x,y)$ be  a global maximum  point of $\Phi_{\eps}$ on $\overline{\Omega}\times\overline{\Omega}$. For notational simplicity we drop the dependence of $x$ and $y$ on $\eps$. and $\mu$.
Using the boundedness of $u$ and $v$, it is clear that $x-y=O(\eps)$ and it follows that, along  a subsequence, $x,y\to \bar{x}\in \overline{\Omega}$.
Since $u$ is lsc and $v$ is continuous, we get that $\underset{\eps\to 0}{ \limsup}\, \Phi_{\eps}(x,y)\leq M_{\mu}$.

On the other hand we have, using the \textbf{continuity} of $v$, we have
$$\Phi_{\eps}(x,y)\geq \Phi_{\eps}(x_0, x_0-\eps\xi(x_0))\geq M_{\mu}+O(\eps).$$
It follows that $\underset{\eps\to 0}{\liminf}\, \Phi_{\eps}\leq M_{\mu}$.  Hence we get that 
\begin{equation}\label{fifi}
 \Phi_{\eps}(x,y)\to M_{\mu}\quad\text{as}\quad\eps\to 0.
\end{equation}

Standard arguments allow us  to deduce from \eqref{fifi} that
\begin{eqnarray}
\left|\dfrac{x-y}{\eps}+\xi\left(\dfrac{x+y}{2}\right)\right|^4=o_{\eps}(1),\\
 \mu u(x)-v(y)\to \mu u(\bar{x})-v(\bar{x})= M_{\mu}\quad \text{as}\quad\eps\to 0.\label{titi}
\end{eqnarray}

Next we claim that, for $\eps$ small enough the viscosity inequalities hold for $u$ and $v$. This is obviously the case if $y\in \Omega$. Using Proposition~\ref{boundcond} and arguing similarly as in the previous proof, we get that, if $y\in\partial\Omega$ than $v(y)\leq \tilde{g}(y)$ and the viscosity inequality holds also in this case. On the other hand,  using \eqref{titi}, 
we get  that 
\begin{equation}
 x=y-\eps\xi(y)+o_{\eps}(1)
\end{equation}
which implies by the smoothness of the domain and the properties of $\xi$ that $x$ lies in $\Omega$ for $\eps$ small enough and hence the viscosity  inequality holds for $\mu u$.

Using the same arguments as the previous  proof, we get that the elements $(q_1, X)\in \overline{\mathcal{J}}^{2,+} u_{\mu}$ and $(q_2, Y)\in \overline{\mathcal{J}}^{2,-} v$ given by the Jensen-Ishii's Lemma satisfy
\begin{eqnarray}
\lambda(1+O(\varepsilon)) \mu^{p-2}u_{\mu}(x)- v(y) &\leq&|q_2|^{p-2} \left\{ o_{\eps}(1)+o(\varepsilon)|q_1|^{q-p+2}+(1-\mu^{p-1-q}) |q_1|^{q-p+2}\right\}\nonumber\\
&+& \mu^{p-1}
(1+O(\varepsilon))\tilde{f}(x)- \tilde{f}(y).
\end{eqnarray}
Letting $\eps\to 0$ and then $\mu\to 1$, we get a contradiction. It follows that $\tilde{u}\leq v$ on $\overline{\Omega}$.

\nocite{*}
\bibliography{bibliodege}
\bibliographystyle{plain}
\end{document}